\theoremstyle{plain}
\crefname{claim}{claim}{claims}
\newtheorem{thm}{Theorem}
\crefname{thm}{theorem}{theorems}
\newtheorem*{thm*}{Theorem}
\crefname{thm*}{theorem}{theorems}
\newtheorem{lem}[thm]{Lemma}
\crefname{lem}{lemma}{lemmas}
\crefname{conj}{conjecture}{conjectures}
\crefname{prop}{proposition}{propositions}
\crefname{exer}{exercise}{exercises}
\newtheorem*{cor}{Corollary}
\crefname{cor}{corollary}{corollaries}
\theoremstyle{definition}
\newtheorem*{defn}{Definition}
\crefname{defn}{definition}{definitions}
\theoremstyle{remark}
\newtheorem{rmk}{Remark}[section]
\crefname{rmk}{remark}{remarks}
\def\R{\mathbb{R}}
\def\Q{\mathbb{Q}}
\def\Z{\mathbb{Z}}
\def\d{\mathrm{d}}
\def\O{\mathcal{O}}
\DeclareFontFamily{U}{min}{}
\DeclareFontShape{U}{min}{m}{n}{<-> udmj30}{}
\def\id{\operatorname{id}}
\def\from{:}
\newcommand{\paren}[1]{\left( #1 \right)}
\renewcommand{\brack}[1]{\left[ #1 \right]}
\renewcommand{\brace}[1]{\left\{ #1 \right\}}
\renewcommand{\angle}[1]{\left\langle #1 \right\rangle}
\newcommand{\absval}[1]{\left| #1 \right|}
\newcommand{\norm}[1]{\left\| #1 \right\|}
\newcommand{\parity}[1]{\operatorname{par} \paren{ #1 }}
\newcommand{\clGrp}[1]{\overline{\angle{#1}}}
\newcommand{\trk}[1]{\operatorname{t-rk} \paren{ #1 }}
\newcommand{\grk}[1]{\operatorname{g-rk} \paren{ #1 }}
\def\intr{\operatorname{int}}
\newcommand{\orb}[2]{\O_{#1} \paren{#2}}
\newcommand{\Fix}[1]{\operatorname{Fix} \paren{#1}}
\def\diam{\operatorname{diam}}
\def\dhr{{\downharpoonright}}
\def\isom{\cong}
\title{Generation and genericity of the group of absolutely continuous homeomorphisms of the interval}
\author{\textsc{Dakota Thor Ihli}\thanks{Department of Mathematics, University of Illinois at Urbana-Champaign, Urbana, Illinois. Email: \href{mailto:dihli2@illinois.edu}{\texttt{dihli2@illinois.edu}}.}}
\date{\vspace{-4ex}}
\begin{document}

\maketitle
\begin{abstract}
    We examine the Polish group $H_{+}^{AC}$ of order-preserving self-homeomorphisms $f$ of the interval $\brack{0,1}$ for which both $f$ and $f^{-1}$ are absolutely continuous; in particular, we establish two results. First, we prove that $H_{+}^{AC}$ is topologically $2$-generated; in fact, it is generically $2$-generated, i.e., there is a dense $G_{\delta}$ set of pairs $\paren{f,g} \in H_{+}^{AC} \times H_{+}^{AC}$ for which $\angle{f,g}$ is dense. Secondly, we prove that $H_{+}^{AC}$ admits a dense $G_{\delta}$ conjugacy class, and we explicitly characterize the elements thereof.
\end{abstract}

\section{Introduction}

For a non-degenerate compact interval $I \subseteq \R$, we let $H_{+} \paren{I}$ denote the group of increasing self-homeomorphisms of $I$ --- abbreviated to just $H_{+}$ when $I$ is understood from context. $H_{+}$ becomes a Polish group when equipped with the topology of uniform convergence.

$H_{+}$ admits a rich collection of subgroups which have been given a great deal of attention across a variety of areas. For example, Thompson's group $F$ famously embeds as a dense subgroup of $H_{+}$. Our focus will be on subgroups of $H_{+}$ obtained by specifying some sort of regularity condition, such as a level of differentiability. Such subgroups are often dense in $H_{+}$, and so not Polish in their inherited topologies. However, they are often \textit{Polishable} --- that is, they may be given Polish group topologies whose Borel sets agree with the existing Borel sets inherited as a subspace of $H_{+}$.

The obvious examples of such subgroups are the groups $D_{+}^{k}$ for $k \geq 1$, consisting of $C^{k}$ diffeomorphisms of $I$, equipped with the usual $C^{k}$ topology; as well as the closely related group $D_{+}^{\infty}$, consisting of the smooth diffeomorphisms. Here, we are interested in an intermediate smoothness condition, namely absolute continuity. Solecki \cite{Sol99} introduces the group $H_{+}^{AC}$, consisting of those maps $f \in H_{+}$ such that both $f$ and $f^{-1}$ are absolutely continuous, and proves that $H_{+}^{AC}$ is Polishable.

Later, Cohen \cite{Coh20} proves that $D_{+}^{k + AC}$ --- the group of $k$-times differentiable homeomorphisms whose $k$th derivative is absolutely continuous --- is Polishable.\footnote{Unlike absolute continuity, the notions of bounded variation and Lipschitz continuity do not give Polish groups this way. We refer to \cite{Coh20} for more remarks on this.} Thus, we have a hierarchical sequence of Polish groups, with each dense in the one above and with the topologies getting progressively finer:
$$H_{+} \geq H_{+}^{AC} \geq D_{+}^{1} \geq D_{+}^{1+AC} \geq D_{+}^{2} \geq \cdots \geq D_{+}^{k} \geq D_{+}^{k+AC} \geq D_{+}^{k+1} \geq \cdots \geq D_{+}^{\infty}.$$

As for $H_{+}^{AC}$, Herndon \cite{Her18} shows that $H_{+}^{AC}$ is coarsely bounded, and so has a well-defined but trivial large scale geometry (in the sense of Rosendal's framework for large scale geometry of Polish groups \cite{Ros21}). This appears to be the entire extent of existing literature on the properties of $H_{+}^{AC}$ as a topological group.

In \Cref{sec:FinGenSubgps} we address the topological rank of $H_{+}^{AC}$. Recall that a topological group $G$ is \textit{topologically $n$-generated} if it admits a dense $n$-generated subgroup. Furthermore, if the set of $n$-tuples which generate a dense subgroup is comeagre in the product topology on $G^{n}$, we say that $G$ is \textit{generically $n$-generated}. It is easy to show that $H_{+}^{AC}$ is not topologically $1$-generated, so the following result is the best one could hope for.

\begin{thm*}\label{GroupIsGen2Gen}
    $H_{+}^{AC}$ is generically $2$-generated.
\end{thm*}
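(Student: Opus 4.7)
The plan is via Baire category. Let $\{U_n\}_{n \in \N}$ enumerate a countable basis for $H_{+}^{AC}$, and for each $n$ set
\[
W_n := \{(f,g) \in (H_{+}^{AC})^{2} : \langle f, g \rangle \cap U_n \neq \emptyset \} = \bigcup_{w} w^{-1}(U_n),
\]
where $w$ ranges over the countable set of formal group-theoretic words in two variables. Each word map is continuous, so each $W_n$ is open, and a pair $(f,g)$ topologically generates $H_{+}^{AC}$ iff $(f,g) \in \bigcap_n W_n$. Thus it suffices to show that each $W_n$ is dense.

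Density reduces to the following approximation problem: given $(f_0, g_0) \in (H_{+}^{AC})^{2}$, a target $h \in H_{+}^{AC}$, and $\epsilon > 0$, produce $(f,g)$ within Solecki-distance $\epsilon$ of $(f_0, g_0)$ and a word $w$ with $w(f,g)$ within $\epsilon$ of $h$. The key point is that the topology on $H_{+}^{AC}$ controls the uniform norms of the maps and their inverses together with the $L^1$ norms of their derivatives; perturbations of $f_0$ or $g_0$ supported on a short interval $J \subseteq [0,1]$ are therefore cheap, provided the induced change in the derivative has small $L^1$-norm.

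My approach is a fragmentation / local construction. First, establish a fragmentation statement for $H_{+}^{AC}$: after an initial conjugation reducing matters to the identity neighbourhood, the target factors as a bounded-length product $h_1 \cdots h_k$ of homeomorphisms each supported on a short subinterval $J_i$. Second, adjust $(f_0, g_0)$ within Solecki-distance $\epsilon$ so that the perturbed pair $(f,g)$ stabilizes a family of disjoint test intervals around each $J_i$, and so that on each such interval the restrictions $f|_{J_i}$ and $g|_{J_i}$ can be prescribed freely in $H_{+}^{AC}(J_i)$ (a group naturally isomorphic to $H_{+}^{AC}$ after rescaling). Third, on each $J_i$ choose $f|_{J_i}, g|_{J_i}$ so that a short word $w_i(f,g)$ -- built using commutators or conjugation to guarantee its support is contained in $J_i$ -- approximates $h_i$. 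Taking $w := w_1 w_2 \cdots w_k$, the product $w(f,g)$ then approximates $h$ to within $\epsilon$, giving the required pair in $W_n$.

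The main obstacle is the derivative control. Unlike in $H_{+}$ under the uniform topology, a short word in slightly perturbed generators need not stay close in Solecki-distance to the same word in the unperturbed generators, because the chain rule can amplify derivative $L^1$-errors multiplicatively. The remedy has three components: keep the word length bounded independently of $\epsilon$; use commutator or conjugation constructions so that each $w_i$ has support precisely in $J_i$; and arrange that outside the working intervals $f$ and $g$ agree with $f_0$ and $g_0$ exactly, so that derivative contributions from the complement telescope and cancel when passing from $w(f_0, g_0)$ to $w(f,g)$. A secondary subtlety is ensuring that $f, g$ truly lie in $H_{+}^{AC}$, not merely $H_{+}$; this is automatic provided the local modifications are piecewise smooth with slopes bounded away from $0$ and $\infty$, so that both $f'$ and $(f^{-1})'$ remain integrable.
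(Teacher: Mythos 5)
Your Baire-category framing (each $W_n$ open, so it suffices to prove density) is the same standard reduction the paper uses, and it is fine. The gap is in the density argument, specifically in your second step. You ask to perturb $(f_0,g_0)$ within Solecki-distance $\epsilon$ so that the new pair stabilizes test intervals around each $J_i$, where the $J_i$ come from fragmenting the target and therefore must spread over essentially all of $[0,1]$. This is impossible for a general $(f_0,g_0)$: since $|f(x)-f_0(x)| = |\int_0^x (f'-f_0')\,\d\lambda| \leq \rho(f,f_0)$, the metric $\rho$ dominates the uniform metric, so an $\epsilon$-perturbation cannot create a fixed point (hence cannot create an invariant interval) at any location where $f_0$ has displacement larger than $\epsilon$. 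Your heuristic that ``perturbations supported on a short interval are cheap'' does not help here, because a perturbation supported on $J$ leaves the values at the endpoints of $J$ unchanged and so can never make $J$ invariant unless $f_0$ already fixed its endpoints; making $f$ fix a prescribed interior point costs at least the displacement of $f_0$ there, which has nothing to do with $|J|$. There are further unproven claims stacked on top of this one: that a \emph{bounded-length}, support-localized word in two freely prescribed local restrictions can approximate an arbitrary $h_i \in H_{+}^{AC}(J_i)$ (this is where essentially all the work lies, and it is asserted rather than argued), and that the fragmentation length can be kept bounded while the supports shrink.

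For contrast, the paper sidesteps exactly this obstruction by never trying to plant invariant intervals in the middle of $[0,1]$. It perturbs $f$ and $g$ only on a small initial segment $[0,\alpha]$, where the $\rho$-cost is automatically at most $2\max(f(\alpha),g(\alpha))$ because $0$ is already fixed; it arranges that $\tilde f,\tilde g$ share no interior fixed points, and encodes $N$ generators of a dense subgroup of $H_{+}^{AC}$ of one small interval $[x_1,x_0]$ near $0$ into $\tilde f$ along the $\tilde g$-orbit of that interval (this uses the nontrivial external input $\operatorname{t-rk}(D_{+}^{1}) \leq 10$, hence topological finite generation of $H_{+}^{AC}$, which your sketch has no substitute for). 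The group's own dynamics then do the transport: by the no-common-fixed-points lemma an element of $\angle{\tilde f,\tilde g}$ conjugates a fundamental interval onto one stretching from below $\gamma$ to above $1-\gamma$, and the error committed near the two endpoints is cheap because the $\rho$-diameter of $H_{+}^{AC}(\brack{a,b})$ is $2(b-a)$, after first replacing the target by one fixing neighborhoods of $0$ and $1$ (the ``blown-up fixed point'' lemma). If you want to salvage your fragmentation scheme, you would have to confine all prescribed local data to regions where $(f_0,g_0)$ already have small displacement and then supply a transport mechanism of this kind; at that point you have essentially reconstructed the paper's proof.
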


(We prove this result as \Cref{HACIsGen2Gen}.)

The proof employs the main techniques of the proof that $H_{+}$ is generically $2$-generated, due to Akhmedov \& Cohen \cite{AC19}, tailored to the absolutely continuous case.

In \Cref{sec:ConjClasses}, we address the generic elements of $H_{+}^{AC}$. Following the convention of Truss \cite{Tru92}, we say that an element of a Polish group is \textit{generic} if its conjugacy class is comeagre. Many Polish groups admit generic elements, and many others do not. Furthermore, even among the Polish groups which admit generic elements, it may be that the generic elements do not admit a (known) description --- a list of dynamical or combinatorial properties that characterize whether or not the element is generic.\footnote{For example, absence of explicit characterizations of generic elements is common when studying automorphism groups of countable first-order structures.} We are able to avoid this unfortunate situation: we prove $H_{+}^{AC}$ admits generics by giving an explicit characterization of which elements are generic.

\begin{thm*}\label{GroupHasGenerics}
    $H_{+}^{AC}$ admits generic elements. In fact, the comeagre conjugacy class of $H_{+}^{AC}$ is exactly the set of $f \in H_{+}^{AC}$ satisfying the following properties:
    \begin{enumerate}[label=(\roman*)]
        \item The set $\Fix{f} := \brace{x \in I : f \paren{x} = x}$ is totally disconnected and has no isolated points (i.e.\ it is a Cantor set);
        \item For any connected components $\paren{a,b}$ and $\paren{c,d}$ of $I \setminus \Fix{f}$, with $b < c$, there are points $x,y \in \paren{b,c}$ with $f \paren{x} < x$ and $f \paren{y} > y$;
        \item $\Fix{f}$ has Lebesgue measure zero.
    \end{enumerate}
\end{thm*}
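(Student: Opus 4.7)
The plan is to show that the set $G \subseteq H_{+}^{AC}$ of elements satisfying \textup{(i)--(iii)} is both a dense $G_\delta$ and a single conjugacy class, which together imply that $G$ is the unique comeagre conjugacy class. Note that conditions \textup{(i)--(iii)} are invariant under $H_{+}^{AC}$-conjugation: in particular, \textup{(iii)} is preserved because elements of $H_{+}^{AC}$ preserve Lebesgue-null sets via the Luzin $\paren{N}$ property.

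First I verify that each of \textup{(i)}, \textup{(ii)}, \textup{(iii)} defines a $G_\delta$ subset. Condition \textup{(i)}, that $\Fix{f}$ is totally disconnected and perfect, is a countable intersection over rational pairs $a < b$ of the open conditions requiring that $\Fix{f} \cap \paren{a,b}$ is either empty or contains both a fixed point and a non-fixed point. Condition \textup{(ii)} is similarly a countable intersection over pairs of rational gaps of open conditions asserting the presence of both displacement directions. For \textup{(iii)}, one writes $\brace{f : \absval{\Fix{f}} = 0}$ as the intersection over $n$ of the open sets $\brace{f : \Fix{f} \text{ is covered by finitely many rational open intervals of total length} < 1/n}$, where openness of the inner sets uses a standard compactness argument on $\brace{x : f\paren{x} \neq x}$. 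Density is established by an explicit perturbation: given $f_0 \in H_{+}^{AC}$ and a basic neighborhood, one constructs $f$ in this neighborhood whose fixed set is a prescribed Cantor set of measure zero with alternating displacement directions, via piecewise AC modifications.

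Next, given $f_1, f_2 \in G$, I construct an AC conjugator $h \in H_{+}^{AC}$ with $h f_1 h^{-1} = f_2$ by a back-and-forth enumeration of the components of $I \setminus \Fix{f_i}$. At each stage a partial order-preserving matching $\Phi$ between finitely many components is extended to cover a new designated component on each side; condition \textup{(ii)} ensures that a component of the required displacement direction is always available in the correct gap. On each matched pair of components, an AC conjugacy of the corresponding fixed-point-free dynamics is built by choosing fundamental domains on both sides, any AC bijection between them, and extending along the orbit by equivariance. Together with the order-isomorphism $\Fix{f_1} \to \Fix{f_2}$ induced by $\Phi$, this assembles into a continuous strictly increasing bijection $h \from I \to I$.

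The principal obstacle is proving $h$ and $h^{-1}$ are globally AC, not merely AC on each component. Each piece is AC by construction, so $h$ is a monotone continuous map of bounded variation; the only question is whether $h$ has a singular part concentrated on $\Fix{f_1}$. Here condition \textup{(iii)} is essential: since $\absval{\Fix{f_2}} = 0$, we have $\sum_n \absval{h\paren{\text{component}_n}} = 1 - \absval{\Fix{f_2}} = 1 = h\paren{1} - h\paren{0}$, so the singular part of $h$ vanishes and $h$ is AC on $I$. The symmetric calculation gives AC-ness of $h^{-1}$. A careful implementation of the back-and-forth, producing convergence in Solecki's Polish topology on $H_{+}^{AC}$ rather than merely uniformly, completes the argument.
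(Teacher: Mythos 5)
Your conjugation argument is essentially the paper's. The paper takes the $H_{+}$-conjugator $\hat{h}$ supplied by the folklore theorem for $C$, replaces it on each orbital by an absolutely continuous bump conjugator (\Cref{BumpsAreConjugate}, whose omitted proof is exactly your fundamental-domain/equivariance device), keeps $\hat{h}$ on $\Fix{f}$, and then verifies absolute continuity by the same measure count you propose: the images of the orbitals exhaust the target up to the null set $\Fix{g}$, so the singular part of the monotone map $h$ vanishes, and the symmetric computation (now using that $\Fix{f}$ is null) handles $h^{-1}$. Your back-and-forth on components merely re-proves the cited folklore fact, so there is no essential difference there. Two small repairs: the identification of fundamental domains should be chosen bi-absolutely continuous (affine suffices), since the inverse of an AC homeomorphism need not be AC and you need $h^{-1}$ to be AC on each orbital before the measure argument can be run for it; and the closing sentence about convergence in Solecki's topology is superfluous --- $h$ is defined outright, and the only thing to check is that $h, h^{-1} \in AC \paren{I}$, which your measure computation already does.

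The genuine gaps are in the category half. Your $G_{\delta}$ presentation of (i) only encodes total disconnectedness: asking that each rational interval either miss $\Fix{f}$ or contain both a fixed and a non-fixed point does not exclude an isolated fixed point, so perfectness is simply dropped. Moreover, the witness-type conditions you declare open (for perfectness and for (ii)) are not open in general --- the existence of a fixed point in a prescribed region is a closed-type condition, which is why the paper only proves that the corresponding sets have dense interior (\Cref{PerfectFixedSetIsGeneric}, \Cref{GammaIsGenericInAC}) and obtains $G_{\delta}$-ness of (i)$\wedge$(ii) instead by citing that $C$ is $G_{\delta}$ in $H_{+}$ and pulling back along the continuous inclusion $H_{+}^{AC} \hookrightarrow H_{+}$; only condition (iii) is honestly $G_{\delta}$ by the compactness/regularity argument, as you indicate. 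Comeagreness of the class would suffice for the theorem, but then density must actually be proved, and that is where most of the paper's work lies: your one-sentence perturbation cannot produce a ``prescribed'' Cantor fixed set, because $\rho$-closeness forces agreement with the displacement of the given $f_{0}$ away from the diagonal. A workable version first tames the fixed set (e.g.\ approximating by a strictly increasing non-identity polynomial, or blowing up fixed points via \Cref{CanStuffAFixedIntervalInThere}) and then inserts null Cantor sets with densely alternating small bumps near the surviving fixed regions; spelled out, this is essentially the content of \Cref{PerfectFixedSetIsGeneric}, \Cref{TDFixedSetIsGeneric}, \Cref{GammaIsGenericInAC} and \Cref{NullFixedPtSetIsGeneric}, so this part of your proposal needs substantial filling in.
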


(We prove this result as \Cref{HACHasGenerics}.)

We remark that the set of all $f \in H_{+}$ satisfying properties (i) and (ii) is the unique dense $G_{\delta}$ conjugacy class in $H_{+}$.\footnote{In \cite[\S 4.2]{Hjo00}, Hjorth identifies this result as folklore.} Indeed, we show that having null sets of fixed points is the only extra information needed to boost ``generic in $H_{+}$'' to ``generic in $H_{+}^{AC}$''.

The proof can be outlined as follows. Properties (i) and (ii) are conjugacy-invariant since they are in $H_{+}$, and (iii) is conjugacy-invariant since $\Fix{gfg^{-1}} = g \brack{\Fix{f}}$, and absolutely continuous images of null sets are null. These properties are $G_{\delta}$ in $H_{+}^{AC}$ since they are $G_{\delta}$ in $H_{+}$, and the inclusion map $H_{+}^{AC} \hookrightarrow H_{+}$ is continuous. Checking that these properties are dense in $H_{+}^{AC}$ constitutes the bulk of \Cref{sec:ConjClasses}. Finally, if $f$ and $g$ satisfy these properties, then (i) and (ii) imply $g = \hat{h} f \hat{h}^{-1}$ for some $\hat{h} \in H_{+}$; we use this map $\hat{h}$ as a framework to build a map $h \in H_{+}^{AC}$ such that $h = h f h^{-1}$, using property (iii) to verify absolute continuity of $h$.

\section{Preliminaries}\label{sec:Preliminaries}

Throughout, $I$ will denote a compact interval in $\R$. All measure-theoretic notions (almost-everywhere, etc.)\ are taken with respect to the Lebesgue measure $\lambda$, unless otherwise stated.

\begin{defn}
    We say a function $f \from I \to \R$ is \textbf{absolutely continuous} if for all $\epsilon > 0$, there exists $\delta > 0$ such that for any pairwise disjoint finite sequence of intervals $\paren{\paren{a_{i}, b_{i}}}_{i < n}$ such that $\sum_{i < n} b_{i} - a_{i} \leq \delta$, we have $\sum_{i < n} \absval{f \paren{b_{i}} - f \paren{a_{i}}} \leq \epsilon$.

    We let $AC \paren{I}$ denote the set of absolutely continuous functions $f \from I \to \R$.
\end{defn}

We collect here a brief overview of the needed background on absolutely continuous functions, without proofs. For more details, we refer the reader to a standard text on measure theory, for example \cite[\S 3.5]{Fol13}.

\begin{itemize}
    \item Any Lipschitz continuous function is absolutely continuous, and any absolutely continuous function is uniformly continuous.
    \item A map $f \from I \to \R$ is absolutely continuous if and only if $f$ is differentiable almost-everywhere, $f' \in L_{1} \paren{I}$, and $f \paren{b} - f \paren{a} = \int_{a}^{b} f' \paren{t} \, \d t$ for all $a,b \in I$ (i.e.\ the fundamental theorem of calculus holds).
    \item Any monotone function $f \from I \to \R$ is differentiable almost everywhere, and the derivative $f'$ is $L_{1}$. Thus, if $f \in H_{+}$, then to check $f \in AC \paren{I}$ it suffices to check the fundamental theorem of calculus.
    \item The canonical non-example is given as follows: let $\mu$ be a non-atomic Borel probability measure on $\brack{0,1}$ whose support is Lebesgue null (for example, the ``coin-flip'' measure on the classical Cantor set), and define $s \from \brack{0,1} \to \R$ by $s \paren{x} := \mu \paren{\brack{0,x}}$. Then $s'$ exists and equals $0$ almost everywhere, but $s$ is not absolutely continuous since $\int_{0}^{1} s' \; \d \lambda = 0 \neq 1 = s \paren{1} - s \paren{0}$.
\end{itemize}

\begin{defn}
    Let $H_{+}^{AC} \paren{I}$ denote the subgroup of $H_{+} \paren{I}$ consisting of maps $f$ for which both $f \in AC \paren{I}$ and $f^{-1} \in AC \paren{I}$. (As with $H_{+}$, we simply write $H_{+}^{AC}$ whenever $I$ is clear from context.)
\end{defn}

\begin{rmk}
    The requirement that both $f$ and $f^{-1}$ are absolutely continuous is necessary. Indeed, define $f \paren{x} := \frac{s \paren{x} + x}{2}$, where $s$ is the non-absolutely continuous map defined above. Then $f \in H_{+} \paren{\brack{0,1}}$; moreover, $f \notin AC \paren{\brack{0,1}}$, but $f^{-1}$ is Lipschitz, and so $f^{-1} \in AC \paren{\brack{0,1}}$.
\end{rmk}

\begin{defn}
    Given $f,g \in AC \paren{I}$, and $a,b \in I$ with $a \leq b$, we define: $$\rho_{a}^{b} \paren{f,g} := \int_{a}^{b} \absval{f' - g'} \d \lambda.$$
    If $I = \brack{a,b}$ we may write $\rho_{I}$, or simply $\rho$ if the interval is clear from context.
\end{defn}

By the usual properties of integrals, for any $f,g \in AC \paren{I}$ we have $\rho_{a}^{c} \paren{f,g} = \rho_{a}^{b} \paren{f,g} + \rho_{b}^{c} \paren{f,g}$ whenever $a \leq b \leq c$. It is straightforward to verify that $\rho_{I}$ is a pseudometric on $AC \paren{I}$, where $\rho_{I} \paren{f,g} = 0$ precisely when $f - g$ is constant. (Moreover, $f$ and $g$ will agree on the endpoints in all the cases we will be interested in. In particular, $\rho$ is a metric on $H_{+}^{AC}$.)

\begin{thm}\cite[Lemma 2.4]{Sol99}
    The metric $\rho$ gives $H_{+}^{AC}$ a Polish group topology, whose Borel sets coincide with the ones inherited from $H_{+}$.
\end{thm}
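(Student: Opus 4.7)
My plan is to verify in sequence that $\paren{H_{+}^{AC}, \rho}$ is Polish as a topological space, that the group operations are $\rho$-continuous, and that the $\rho$-Borel sets coincide with those inherited from $H_{+}$. Separability is direct: PL homeomorphisms with rational breakpoints and rational slopes form a countable $\rho$-dense subset, since nonnegative rational step functions are $L^{1}$-dense in the appropriate cone. Complete metrizability is more subtle, because $\rho$ itself is \emph{not} complete on $H_{+}^{AC}$: e.g.\ the sequence of PL maps with slopes $\paren{2 - 1/n, 1/n}$ on the partition $\paren{\brack{0, 1/2}, \brack{1/2, 1}}$ is $\rho$-Cauchy, but the $L^{1}$-limit of the derivatives gives a non-injective limit. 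So I would instead realize $H_{+}^{AC}$ as a $G_{\delta}$ subset of the Polish space $AC_{0} \paren{I}$ of nondecreasing AC maps fixing the endpoints, which is $\rho$-complete via the isometric identification $f \leftrightarrow f'$ with a closed cone in $L^{1}$. Writing
$$H_{+}^{AC} = \bigcap_{n \geq 1} \bigcup_{k \geq 1} \brace{f \in AC_{0} : \lambda \paren{\brace{f' \leq 1/k}} < 1/n},$$
I would check each set in the intersection is $\rho$-open via Chebyshev's inequality together with monotone continuity of $\delta \mapsto \lambda \paren{\brace{f' \leq t + \delta}}$ as $\delta \to 0^{+}$.

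Continuity of the group operations will hinge on two observations. First, $\rho$-convergence implies uniform convergence: $\absval{f \paren{x} - g \paren{x}} = \absval{\int_{a}^{x} \paren{f' - g'} \, \d \lambda} \leq \rho \paren{f, g}$, using $f \paren{a} = g \paren{a} = a$. Second, $\rho$ is right-invariant by change of variable: $\rho \paren{f \circ h, g \circ h} = \int \absval{\paren{f' - g'} \circ h} \cdot h' \, \d \lambda = \rho \paren{f, g}$, so right multiplication is an isometry. Continuity of inversion and left multiplication then follow from the chain-rule identities $\paren{f^{-1}}' \paren{y} = 1 / f' \paren{f^{-1} \paren{y}}$ and $\paren{h \circ f}' = \paren{h' \circ f} \cdot f'$, combined with subsequential a.e.\ convergence of derivatives (extracted from $L^{1}$-convergence) together with dominated convergence.

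For the Borel coincidence, the inclusion $\iota \from \paren{H_{+}^{AC}, \rho} \hookrightarrow \paren{H_{+}, \norm{\cdot}_{\infty}}$ is continuous (by the uniform-convergence observation above) and injective between Polish spaces, so by the Lusin--Souslin theorem $\iota$ maps Borel sets to Borel sets; combined with continuity, this yields equality of Borel $\sigma$-algebras. The most delicate step is continuity of the group operations, in particular inversion: carefully balancing $L^{1}$ versus pointwise a.e.\ convergence of derivatives under change of variables is the main obstacle, while the $G_{\delta}$ characterization and Lusin--Souslin application are comparatively routine once the framework is in place.
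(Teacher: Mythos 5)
Your overall architecture is sound, and since the paper itself supplies no proof but only cites \cite{Sol99}, it is the right kind of argument to attempt: separability via rational PL maps, the (correct) observation that $\rho$ itself is incomplete, Polishness by realizing $H_{+}^{AC}$ as a $G_{\delta}$ subset of the $L^{1}$-cone of derivatives, right-invariance of $\rho$ by change of variables, and Lusin--Souslin for the coincidence of Borel structures are all viable and essentially standard. Two smaller cautions: the innermost sets $\brace{f : \lambda \paren{\brace{f' \leq 1/k}} < 1/n}$ are \emph{not} $\rho$-open (one can push $f'$ just below the threshold on a set of large measure at arbitrarily small $L^{1}$ cost), so the openness claim must be made for the union over $k$ --- which your Chebyshev estimate does give, since $\lambda \paren{\brace{f' \leq 1/k}} < 1/n$ forces $\lambda \paren{\brace{g' \leq 1/(2k)}} < 1/n$ for all $g$ with $\norm{g'-f'}_{L^{1}}$ small; and you should record the Banach--Zarecki/Luzin (N) fact that, within the nondecreasing AC maps fixing the endpoints, membership in $H_{+}^{AC}$ is equivalent to $f' > 0$ a.e., which is what your $G_{\delta}$ formula actually detects.

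The genuine gap is the continuity of inversion and left multiplication. The tools you name --- the chain rule plus subsequential a.e.\ convergence of $f_{n}'$ plus dominated convergence --- do not suffice, for two reasons. First, a.e.\ convergence $f_{n}' \to f'$ gives no information about the compositions $h' \circ f_{n}$ or $f_{n}' \circ f_{n}^{-1}$: the derivatives are only defined a.e.\ and are not continuous, so convergence of the evaluation points $f_{n} \paren{x} \to f \paren{x}$ cannot be combined with a.e.\ convergence of the outer function; $h' \circ f_{n} \to h' \circ f$ a.e.\ is simply not implied. Second, there is no dominating function: $\paren{f_{n}^{-1}}' = 1 / \paren{f_{n}' \circ f_{n}^{-1}}$ and $\paren{h' \circ f_{n}} \cdot f_{n}'$ need not be majorized by any fixed $L^{1}$ function, so dominated convergence is the wrong tool (Scheff\'e or Vitali with uniform integrability is the right shape, but the a.e.\ or in-measure convergence still has to come from somewhere). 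The standard repair uses the right-invariance you already established: it reduces both joint continuity of composition and continuity of inversion to continuity of a single left translation $g \mapsto h \circ g$ at one point, via $\rho \paren{f_{n}^{-1}, f^{-1}} = \rho \paren{\id, f^{-1} \circ f_{n}}$ and $\rho \paren{f_{n} g_{n}, f g} \leq \rho \paren{f_{n}, f} + \rho \paren{f g_{n}, f g}$. Left translation is then handled by approximating $h'$ in $L^{1}$ by a continuous $u$: the error terms are controlled exactly by the change-of-variables identity $\int \absval{\paren{h' - u} \circ g_{n}} \, g_{n}' \, \d \lambda = \norm{h' - u}_{L^{1}}$, while the main term $\int \absval{u \paren{g_{n}} g_{n}' - u \paren{g} g'} \, \d \lambda$ goes to zero using uniform convergence $g_{n} \to g$, uniform continuity and boundedness of $u$, and $L^{1}$-convergence of $g_{n}'$. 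With that step replaced, your outline goes through.
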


The topology generated by $\rho$ is strictly finer than the topology inherited as a subspace of $H_{+}$. Indeed, the proof of the following lemma gives a sequence $\paren{f_{n}} \subseteq H_{+}^{AC}$ which converges to the identity uniformly but fails to converge in $H_{+}^{AC}$.

\begin{lem}\label{RhoDistanceBound}
    For any $f,g \in H_{+}^{AC} \paren{I}$ and $a,b \in I$, we have $\rho_{a}^{b} \paren{f,g} \leq f \paren{b} - f \paren{a} + g \paren{b} - g \paren{a}$. It follows that $\diam_{\rho} \paren{H_{+}^{AC} \paren{\brack{a,b}}} = 2 \paren{b-a}$.
\end{lem}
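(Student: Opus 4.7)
The plan is to split into two pieces: first the pointwise inequality, then an explicit construction saturating the diameter bound.

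For the main inequality, I would use that elements of $H_{+}^{AC}$ are (weakly) increasing and absolutely continuous, so $f'$ and $g'$ are defined and non-negative almost everywhere on $I$. This immediately yields $\absval{f' - g'} \leq f' + g'$ a.e. Integrating from $a$ to $b$ and invoking the fundamental theorem of calculus for absolutely continuous functions (from the bulleted list in \Cref{sec:Preliminaries}) gives
$$\rho_{a}^{b} \paren{f,g} = \int_{a}^{b} \absval{f' - g'} \, \d \lambda \leq \int_{a}^{b} f' \, \d \lambda + \int_{a}^{b} g' \, \d \lambda = \paren{f \paren{b} - f \paren{a}} + \paren{g \paren{b} - g \paren{a}}.$$

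The diameter upper bound follows by specializing to $I = \brack{a,b}$: any $f \in H_{+}^{AC} \paren{\brack{a,b}}$ satisfies $f \paren{a} = a$ and $f \paren{b} = b$, so the inequality just proved reads $\rho \paren{f,g} \leq 2 \paren{b-a}$.

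For the matching lower bound, I would exhibit piecewise linear homeomorphisms that nearly saturate the inequality. For each $n \geq 2$, partition $\brack{a,b}$ into $2n$ equal subintervals of length $L = \paren{b-a}/\paren{2n}$, and let $f_{n}$ be the piecewise linear map with slope $\alpha_{n} := 2 - 1/n$ on the odd-indexed subintervals and slope $\beta_{n} := 1/n$ on the even-indexed subintervals. Let $g_{n}$ be defined analogously but with the roles of $\alpha_{n}$ and $\beta_{n}$ swapped. Since $\alpha_{n} + \beta_{n} = 2$, both maps send $\brack{a,b}$ to itself; and since both are bi-Lipschitz with strictly positive slopes, both $f_{n}, f_{n}^{-1}, g_{n}, g_{n}^{-1}$ are Lipschitz, hence in $AC \paren{\brack{a,b}}$. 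Now $\absval{f_{n}' - g_{n}'} = \absval{\alpha_{n} - \beta_{n}} = 2 - 2/n$ almost everywhere on $\brack{a,b}$, so
$$\rho \paren{f_{n}, g_{n}} = \paren{2 - 2/n} \paren{b-a} \longrightarrow 2 \paren{b-a}.$$
Combined with the upper bound, this yields $\diam_{\rho} \paren{H_{+}^{AC} \paren{\brack{a,b}}} = 2 \paren{b-a}$.

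Neither step is really an obstacle; the only slightly non-obvious point is cooking up the sequence $\paren{f_{n}, g_{n}}$ for the lower bound, where it is natural to first try $f_{n} = \id$ and discover that one only obtains $b - a$, which motivates using two maps with oppositely arranged steep/shallow portions so that $\absval{f_{n}' - g_{n}'}$ is uniformly close to $2$.
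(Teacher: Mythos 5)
Your proof is correct, and the main inequality is argued exactly as in the paper: since $f$ and $g$ are increasing, $f', g' \geq 0$ almost everywhere, so $\absval{f' - g'} \leq f' + g'$, and integrating together with the fundamental theorem of calculus for absolutely continuous maps gives the bound; the diameter upper bound then follows because elements of $H_{+}^{AC} \paren{\brack{a,b}}$ fix the endpoints.

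Where you diverge is the lower bound. The paper keeps one of the two maps equal to the identity and uses a single sequence of piecewise-linear maps $f_{n}$ whose slopes alternate between a large value (of order $n$, on pieces of length $1/n^{2}$) and a value near zero, computing $\rho \paren{f_{n}, \id} \to 2$; you instead take a pair $\paren{f_{n}, g_{n}}$ of complementary piecewise-linear maps with bounded slopes $2 - 1/n$ and $1/n$ arranged oppositely, so that $\absval{f_{n}' - g_{n}'} = 2 - 2/n$ almost everywhere. Both witness families are valid, and yours is arguably easier to compute with (the paper leaves its computation as an exercise). Two minor remarks: your closing heuristic that trying $g_{n} = \id$ ``only obtains $b-a$'' is accurate for your bounded-slope family but not in general --- the paper's family shows the identity works once the steep slopes are allowed to blow up on shrinking pieces; and the paper's specific choice has the side benefit, used in the sentence preceding the lemma, that $\paren{f_{n}}$ converges to the identity uniformly while remaining at $\rho$-distance about $2$, exhibiting that the $\rho$-topology is strictly finer than the uniform topology. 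Neither point affects the correctness of your argument.
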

\begin{proof}
    For any $f \in H_{+}^{AC}$, we have $f' \geq 0$ at every point where $f'$ exists, since $f$ is increasing. Thus, for each $f,g \in H_{+}^{AC}$:
    \begin{align*}
        \rho_{a}^{b} \paren{f,g} &= \int_{a}^{b} \absval{f' - g'} \d \lambda\\
        &\leq \int_{a}^{b} \absval{f'} \d \lambda + \int_{a}^{b} \absval{g'} \d \lambda\\
        &= \int_{a}^{b} f' \; \d \lambda + \int_{a}^{b} g' \; \d \lambda\\
        &= f \paren{b} - f \paren{a} + g \paren{b} - g \paren{a}
    \end{align*}

    In the special case where $a,b$ are the endpoints of $I$, we have $\rho_{a}^{b} \paren{f,g} \leq 2 \paren{b-a}$ since $f,g$ fix the endpoints. Hence, $\diam_{\rho} \paren{H_{+}^{AC}} \leq 2 \paren{b-a}$.
    
    For the reverse inequality, we fix $I = \brack{0,1}$; the argument is easily adjusted for general $\brack{a,b}$. For each $n \geq 2$, let $f_{n}$ be the piecewise-linear map generated by the set of points: $$\brace{\paren{\frac{i}{n}, \frac{i}{n}} : 0 \leq i \leq n} \cup \brace{\paren{\frac{i}{n} + \frac{1}{n^{2}}, \frac{i+1}{n} - \frac{1}{n^{2}}} : 0 \leq i < n}.$$
    Piecewise-linear maps defined in this way are clearly absolutely continuous, so $\paren{f_{n}}_{n \geq 2} \subseteq H_{+}^{AC}$. We leave it as an exercise to the reader to calculate that $\rho \paren{f_{n}, \id_{I}} = 2 - \frac{4}{n}$, and so $\diam_{\rho} \paren{H_{+}^{AC}} \geq \lim_{n \to \infty} \rho \paren{f_{n},\id_{I}} = 2$.
\end{proof}

Observe that for $f \in H_{+}$, the set $I \setminus \Fix{f}$ is open, and its connected components may be categorized according to whether $f \paren{x} > x$ or $f \paren{x} < x$ for each point $x$ of the connected component---that is, whether the ``bumps'' lie above or below the diagonal. We introduce some useful notation to describe this behavior.

\begin{defn}
    Given $f \in H_{+} \paren{I}$ and $x \in I$, we define the \textbf{orbital} of $x$ under $f$, denoted $\orb{f}{x}$, as follows. If $x \in \Fix{f}$, we set $\orb{f}{x} := \brace{x}$; otherwise, we let $\orb{f}{x}$ be the connected component of $I \setminus \Fix{f}$ containing $x$.

    If $x \in \Fix{f}$, we say $\orb{f}{x}$ has parity zero; otherwise, we say $\orb{f}{x}$ has positive (resp. negative) parity if $f \paren{x} > x$ (resp. $f \paren{x} < x$). Similarly, we say a point has positive, negative, or zero parity if its orbital has the corresponding parity.
\end{defn}

Observe that the orbital $\orb{f}{x}$ is the convex hull of the orbit $f^{\Z} \paren{x} := \brace{f^{n} \paren{x} : n \in \Z}$; that is, we have $\orb{f}{x} = \paren{\inf f^{\Z} \paren{x}, \sup f^{\Z} \paren{x}}$ whenever $x \notin \Fix{f}$. We mention a consequence of this observation, which will be needed later:

\begin{lem}\label{NoCommonFixedPointsCanMovePts}
    Let $\Gamma \leq H_{+}$ be a subgroup with generating set $A \subseteq \Gamma$. Then for every $a \in I$, we have $\sup \brace{f \paren{a} : f \in \Gamma} \in \bigcap_{f \in A} \Fix{h}$.
\end{lem}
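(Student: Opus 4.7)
The plan is to set $b := \sup \brace{f \paren{a} : f \in \Gamma}$, which exists in $I$ because $I$ is compact and the set in question is bounded above by the right endpoint of $I$. I will then show directly that $g \paren{b} = b$ for every $g \in A$, via a two-sided sandwich argument that crucially exploits that $\Gamma$ is a group generated by $A$ (not merely a semigroup).

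For the upper bound $g \paren{b} \leq b$: fix $g \in A \subseteq \Gamma$. For any $f \in \Gamma$, the composition $gf$ lies in $\Gamma$, so $g \paren{f \paren{a}} = \paren{gf} \paren{a} \leq b$. Since $g$ is continuous and monotone, it commutes with suprema over nonempty sets, giving $g \paren{b} = \sup_{f \in \Gamma} g \paren{f \paren{a}} \leq b$.

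For the lower bound $b \leq g \paren{b}$: here I use that $\Gamma$ is a group, so $g^{-1} \in \Gamma$ even though $g^{-1}$ need not lie in $A$. For each $f \in \Gamma$ we have $g^{-1} f \in \Gamma$, hence $\paren{g^{-1} f} \paren{a} \leq b$, which rearranges to $f \paren{a} \leq g \paren{b}$. Taking the supremum over $f \in \Gamma$ yields $b \leq g \paren{b}$, completing the equality $g \paren{b} = b$. Since $g \in A$ was arbitrary, $b \in \bigcap_{f \in A} \Fix{f}$ as desired.

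There is no real obstacle here; the only conceptual point is recognizing that it is precisely the inversion closure of $\Gamma$ (the defining feature distinguishing a group generated by $A$ from a semigroup generated by $A$) that supplies the reverse inequality. In the context of orbitals, this says that the right endpoint of the orbital containing $a$ under $\angle{A}$ is pinned down by the fixed points of the generators alone, which is the sort of statement the later sections will presumably exploit.
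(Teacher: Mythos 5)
Your proof is correct, but it takes a different route from the paper's. The paper argues by contradiction using the orbital machinery introduced just before the lemma: if $b := \sup \brace{f \paren{a} : f \in \Gamma}$ were not fixed by some $f \in A$, then $\orb{f}{b}$ is an open neighbourhood of $b$ containing some $h \paren{a}$ with $h \in \Gamma$, and since the $f$-orbit of any point is cofinal in its orbital, some $f^{N} h \paren{a}$ exceeds $b$, contradicting the definition of the supremum. Your argument instead establishes $g \paren{b} = b$ directly for each generator $g$ by a two-sided estimate: $g \paren{b} \leq b$ because $g$, being increasing and continuous, commutes with the supremum and $gf \in \Gamma$ for every $f \in \Gamma$; and $b \leq g \paren{b}$ because $g^{-1} f \in \Gamma$ gives $g^{-1} \paren{f \paren{a}} \leq b$, hence $f \paren{a} \leq g \paren{b}$, for every $f \in \Gamma$. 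Both arguments are sound; yours is more elementary and self-contained (indeed, the lower-bound trick applied to $g^{-1}$ in place of $g$ would even let you dispense with continuity, using only monotonicity and closure under inverses), while the paper's proof leans on the orbit-cofinality picture of orbitals that it has just set up and will reuse dynamically in the proof of the $2$-generation theorem. You also correctly read the paper's $\Fix{h}$ in the statement as the intended $\Fix{f}$.
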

\begin{proof}
    Fix $a \in I$. Let $x := \sup \brace{f \paren{a} : f \in \Gamma}$, and suppose for contradiction that $x \notin \bigcap_{f \in A} \Fix{f}$. Take $f \in A$ so that $x \notin \Fix{f}$. Then the orbital $\orb{f}{x}$ is an open neighborhood of $x$. By choice of $x$, there is $h \in \Gamma$ such that $h \paren{a} \in \orb{f}{x}$. But then $f^{\Z} \paren{h \paren{a}}$ is cofinal in $\orb{f}{x}$, so there is $N \in \Z$ such that $f^{N} \paren{h \paren{a}} > x = \sup \brace{f \paren{a} : f \in \Gamma}$ --- a contradiction, since $f^{N}h \in \Gamma$.
\end{proof}

\section{Finitely generated subgroups}\label{sec:FinGenSubgps}
In the sequel, we fix $I := \brack{0,1}$ unless otherwise noted.

\subsection{Topological rank}
\begin{defn}
    For a Polish group $G$ and an $n \leq \aleph_{0}$, we define: $$\Omega_{n} := \brace{\paren{g_{i}}_{i < n} \in G^{n} : \textnormal{$\angle{g_{i} : i < n}$ is dense in $G$}}.$$

    We say $G$ is \textit{topologically $n$-generated} if there is a dense $n$-generated subgroup, i.e., if $\Omega_{n} \neq \emptyset$. Similarly, we say $G$ is \textit{generically $n$-generated} if the set $\Omega_{n}$ is comeagre in the product topology on $G^{n}$.

    We call the least $n$ for which $G$ is topologically (resp. generically) $n$-generated the \textit{topological rank} (resp. \textit{generic rank}), denoted $\trk{G}$ (resp. $\grk{G}$).
\end{defn}

We list some basic properties of topological and generic rank.

\begin{itemize}
    \item Since Polish groups are Baire, being generically $n$-generated implies being topologically $n$-generated. In particular, $\trk{G} \leq \grk{G}$.
    \item If $G$ is topologically (resp. generically) $n$-generated and $m \geq n$, then $G$ is topologically (resp. generically) $m$-generated. The case for ``topologically'' is immediate, and the case for ``generically'' follows from the Kuratowski-Ulam theorem.
    \item By second-countability, the set $\Omega_{n}$ is $G_{\delta}$.\footnote{This is because $\angle{g_{i} : i < n}$ is dense iff for every basic open set $U$, there exists a word $w$  in the free group on $n$ letters such that $w \paren{\paren{g_{i}}_{i < n}} \in U$. Each $w$ corresponds to a continuous map $G^{n} \to G$, by continuity of the group operations.} Then to show $G$ is generically $n$-generated, it suffices to show $\Omega_{n}$ is dense in $G^{n}$.
    \item Every Polish group is generically $\aleph_{0}$-generated. 
\end{itemize}

\begin{lem}\label{TopRankBounds}
    If $\phi \from G_{1} \to G_{2}$ is a continuous group homomorphism with dense image, we have $\trk{G_{2}} \leq \trk{G_{1}}$.
\end{lem}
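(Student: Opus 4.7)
The plan is to simply transport a dense $n$-generated subgroup of $G_1$ across $\phi$ to obtain a dense $n$-generated subgroup of $G_2$. Suppose $n := \trk{G_1}$ and pick $\paren{g_i}_{i < n} \in \Omega_n \paren{G_1}$, so that $\angle{g_i : i < n}$ is dense in $G_1$. My candidate generators for a dense subgroup of $G_2$ will be $\paren{\phi \paren{g_i}}_{i < n}$.

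The main step is to observe that since $\phi$ is a group homomorphism, it sends the subgroup generated by the $g_i$ onto the subgroup generated by the $\phi \paren{g_i}$; that is, $\phi \brack{\angle{g_i : i < n}} = \angle{\phi \paren{g_i} : i < n}$. Combining this with continuity of $\phi$ (which gives $\phi \brack{\overline{A}} \subseteq \overline{\phi \brack{A}}$ for any $A \subseteq G_1$) and density of $\angle{g_i : i < n}$ in $G_1$, I get
$$\phi \brack{G_1} = \phi \brack{\overline{\angle{g_i : i < n}}} \subseteq \overline{\phi \brack{\angle{g_i : i < n}}} = \overline{\angle{\phi \paren{g_i} : i < n}}.$$
Taking closures on both sides and using that $\phi \brack{G_1}$ is dense in $G_2$ then yields $G_2 = \overline{\angle{\phi \paren{g_i} : i < n}}$, witnessing that $G_2$ is topologically $n$-generated.

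There is no real obstacle here; the only subtlety is the standard point that a continuous map need not send closures to closures, but only into closures of images, which is exactly what is needed. Note also that the argument handles $n = \aleph_0$ uniformly, and that it does \emph{not} directly pass to generic rank, since comeagreness of $\Omega_n \paren{G_1}$ in $G_1^n$ is not preserved by the image under $\phi^n$ in general.
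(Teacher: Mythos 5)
Your proof is correct and follows essentially the same route as the paper's: push the generators forward through $\phi$, use that a homomorphism carries the generated subgroup onto the subgroup generated by the images, apply $\phi \brack{\overline{A}} \subseteq \overline{\phi \brack{A}}$ from continuity, and conclude density from the density of $\phi \brack{G_{1}}$. No gaps.
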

\begin{proof}
    If $G_{1} = \clGrp{A}$ for some $A \subseteq G_{1}$ with $\absval{A} = \trk{G_{1}}$, we have $\phi \brack{\angle{A}} = \angle{\phi \brack{A}}$ since $\phi$ is a homomorphism, and $\phi \brack{\clGrp{A}} \subseteq \overline{\phi \brack{\angle{A}}}$ by continuity. Then $\clGrp{\phi \brack{A}} = \overline{\phi \brack{\angle{A}}} \supseteq \phi \brack{\clGrp{A}} = \phi \brack{G_{1}}$. Since the closure of $\angle{\phi \brack{A}}$ contains the dense set $\phi \brack{G_{1}}$, it follows that $\angle{\phi \brack{A}}$ is dense, i.e.\ $\phi \brack{A}$ generates a dense subgroup of $G_{2}$. Thus $\trk{G_{2}} \leq \absval{\phi \brack{A}} \leq \absval{A} = \trk{G_{1}}$.
\end{proof}

The following result is the current state of affairs for the topological rank of $H_{+}^{AC}$, given the existing literature.

\begin{cor}
    $2 \leq \trk{H_{+}^{AC}} \leq 10$.
\end{cor}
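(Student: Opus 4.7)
The plan breaks into the two inequalities, which have quite different character.

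For the lower bound $\trk{H_{+}^{AC}} \geq 2$, my plan is to rule out topological $1$-generation directly. Suppose for contradiction that $H_{+}^{AC} = \clGrp{f}$ for some $f \in H_{+}^{AC}$. Then the cyclic---hence abelian---subgroup $\angle{f}$ is dense, and in a Hausdorff topological group the closure of an abelian subgroup is abelian, since the commutation relation $xy = yx$ cuts out a closed subset of $G \times G$. So $H_{+}^{AC}$ would be abelian, which is visibly false: take two piecewise-linear order-preserving homeomorphisms of $\brack{0,1}$ in generic position (so that they fail to commute); both lie in $H_{+}^{AC}$ because PL maps are Lipschitz and thus absolutely continuous.

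For the upper bound $\trk{H_{+}^{AC}} \leq 10$, my plan is to invoke \Cref{TopRankBounds}. It suffices to exhibit a Polish group $G$ with $\trk{G} \leq 10$ together with a continuous homomorphism $G \to H_{+}^{AC}$ with dense image, and then to cite the appropriate existing topological-rank bound. The natural candidates are the strictly finer Polish groups sitting inside $H_{+}^{AC}$ from the hierarchy recalled in the introduction---for instance $D_{+}^{\infty}$ or some $D_{+}^{k}$ or $D_{+}^{k+AC}$. For any such $G$, the inclusion $G \hookrightarrow H_{+}^{AC}$ is continuous (the corresponding topology refines the $\rho$-topology), and the image is $\rho$-dense by a standard mollification argument applied to the derivative: given $f \in H_{+}^{AC}$ one approximates $f' \in L^{1}$ by a strictly positive smooth function of integral one (mollify, add a small constant, renormalise), and integrates back to obtain a smooth approximant in $\rho$.

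The only genuine obstacle is bookkeeping: locating, in the literature on finite generation of smooth diffeomorphism groups of the interval, a concrete numerical bound on the topological rank of one of these finer groups that comes in at or below $10$. The constant $10$ is certainly not sharp---this corollary exists precisely to benchmark the current state of the art against the main theorem of this section, \Cref{GroupIsGen2Gen}, which will improve the upper bound all the way down to the lower bound of $2$ by a direct and considerably more delicate approximation argument that does not factor through any finer subgroup in the hierarchy.
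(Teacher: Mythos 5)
Your lower bound is correct and in fact a cleaner route than the paper's: you rule out a dense cyclic subgroup directly, since the closure of an abelian subgroup of a Hausdorff group is abelian while $H_{+}^{AC}$ is visibly nonabelian (non-commuting PL maps are bi-Lipschitz, hence lie in $H_{+}^{AC}$). The paper instead deduces $2 \leq \trk{H_{+}^{AC}}$ from $\trk{H_{+}} = 2$ via \Cref{TopRankBounds} applied to the continuous dense inclusion $H_{+}^{AC} \hookrightarrow H_{+}$, using discreteness of cyclic subgroups of $H_{+}$ for the lower bound and Thompson's group $F$ for the matching upper bound in $H_{+}$; your argument needs none of that machinery.

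For the upper bound, your reduction is exactly the paper's (apply \Cref{TopRankBounds} to the continuous, dense inclusion of a finer group in the hierarchy, with density checked by mollifying the derivative, adding a small positive constant, renormalising, and integrating back --- that argument is fine), but the proposal stops short of the one fact that actually carries the number $10$, and that fact is not interchangeable among the candidate groups you list. The paper's source is Akhmedov and Cohen \cite{AC19}, who prove $\trk{D_{+}^{1}} \leq 10$; so the group to use is specifically $D_{+}^{1}$ with its $C^{1}$ topology, not $D_{+}^{\infty}$ or $D_{+}^{k+AC}$, for which no such bound is invoked anywhere in the paper. Without naming that result the inequality $\trk{H_{+}^{AC}} \leq 10$ is not established, since the entire content of the bound is that citation; calling it bookkeeping understates that it is the load-bearing step. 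One further correction to your closing remark: the corollary is not merely a benchmark superseded by \Cref{GroupIsGen2Gen}. The proof of \Cref{HACIsGen2Gen} uses the finite upper bound as an input, choosing topological generators $\phi_{0},\ldots,\phi_{N-1}$ of $H_{+}^{AC}\paren{\brack{x_{1},x_{0}}} \isom H_{+}^{AC}\paren{I}$ with $N = 10$, so the genericity theorem factors through this corollary rather than replacing it.
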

\begin{proof}
    Let $D_{+}^{1}$ denote the group of increasing $C^{1}$-diffeomorphisms of $I$, with the usual $C^{1}$ topology. Then the inclusion maps $D_{+}^{1} \subseteq H_{+}^{AC} \subseteq H_{+}$ are continuous and have dense image, so by \Cref{TopRankBounds}, we have $\trk{H_{+}} \leq \trk{H_{+}^{AC}} \leq \trk{D_{+}^{1}}$. We have $\trk{H_{+}} = 2$: each cyclic subgroup of $H_{+}$ is discrete, so $\trk{H_{+}} > 1$, but Thompson's group $F$ is $2$-generated and embeds in $H_{+}$ as a dense subgroup, so $\trk{H_{+}} = 2$. Moreover, Akhmedov \& Cohen showed in \cite{AC19} that $\trk{D_{+}^{1}} \leq 10$.
\end{proof}

\subsection{Genericity of generators}

The goal of the rest of this section is to show that $\grk{H_{+}^{AC}} = 2$. This is the best result possible, since we already have $2 \leq \trk{H_{+}^{AC}} \leq \grk{H_{+}^{AC}}$. 

As mentioned in the introduction, the main technique is similar to the proof that $\grk{H_{+}} = 2$, from \cite{AC19}. The main difference in the $H_{+}^{AC}$ case is the extra care that must be taken to ensure various approximations by ``nice'' maps can be made.

The following lemma shows that we may approximate an element of $H_{+}^{AC}$ by blowing up fixed points to entire fixed intervals.

\begin{lem}\label{CanStuffAFixedIntervalInThere}
    Let $\phi \in H_{+}^{AC}$, let $\paren{x_{i}}_{i < N}$ a collection of fixed points of $\phi$, and let $\paren{\brack{a_{i}, b_{i}}}_{i < N}$ be a pairwise-disjoint collection of closed intervals with $x_{i} \in \brack{a_{i},b_{i}}$ for each $i < N$. Then there is a map $\psi \in H_{+}^{AC}$ which fixes each $\brack{a_{i},b_{i}}$ pointwise, and satisfying: $$\rho_{I} \paren{\phi, \psi} \leq \sum_{i < N}\absval{a_{i} - \phi \paren{a_{i}}} + \absval{\phi \paren{b_{i}} - b_{i}} + \paren{b_{i} - a_{i}} + \paren{\phi \paren{b_{i}} - \phi \paren{a_{i}}}.$$
\end{lem}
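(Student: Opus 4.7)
The plan is to define $\psi$ explicitly by setting $\psi = \id$ on each $\brack{a_{i},b_{i}}$, and on the complementary ``gap'' intervals defining $\psi$ as an affine rescaling of $\phi$ chosen so that $\psi$ glues together continuously. Reordering if necessary, assume $a_{0} < b_{0} < \cdots < a_{N-1} < b_{N-1}$, and set $b_{-1} := 0$, $a_{N} := 1$ (both of which are automatically fixed by $\phi$). On each gap interval $\brack{b_{i}, a_{i+1}}$ for $-1 \leq i \leq N-1$, define
$$\psi \paren{t} := b_{i} + \alpha_{i} \paren{\phi \paren{t} - \phi \paren{b_{i}}}, \qquad \alpha_{i} := \frac{a_{i+1} - b_{i}}{\phi \paren{a_{i+1}} - \phi \paren{b_{i}}}.$$
The scaling factor $\alpha_{i} > 0$ is forced by demanding $\psi \paren{b_{i}} = b_{i}$ and $\psi \paren{a_{i+1}} = a_{i+1}$, which makes $\psi$ continuous where it meets the identity on the $\brack{a_{i},b_{i}}$.

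Showing $\psi \in H_{+}^{AC}$ is then routine. The map $\psi$ is strictly increasing and fixes the endpoints of $I$. It is absolutely continuous on each piece of the partition (trivially on the identity pieces, and because AC is preserved under affine transformations on the gap pieces), and a continuous function that is AC on each piece of a finite partition is globally AC. The same argument applied to $\psi^{-1}$ --- which on each gap is an affine rescaling of $\phi^{-1}$ --- completes the verification.

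For the bound, integrate $\absval{\phi' - \psi'}$ piecewise. On $\brack{a_{i},b_{i}}$, one has $\psi' \equiv 1$, so
$$\int_{a_{i}}^{b_{i}} \absval{\phi' - 1} \, \d \lambda \leq \int_{a_{i}}^{b_{i}} \phi' \, \d \lambda + \paren{b_{i} - a_{i}} = \paren{\phi \paren{b_{i}} - \phi \paren{a_{i}}} + \paren{b_{i} - a_{i}}.$$
On a gap $\brack{b_{i}, a_{i+1}}$, one has $\psi' = \alpha_{i} \phi'$, giving
$$\int_{b_{i}}^{a_{i+1}} \absval{\phi' - \psi'} \, \d \lambda = \absval{1 - \alpha_{i}} \paren{\phi \paren{a_{i+1}} - \phi \paren{b_{i}}} = \absval{ \paren{\phi \paren{a_{i+1}} - a_{i+1}} - \paren{\phi \paren{b_{i}} - b_{i}}},$$
which by the triangle inequality is at most $\absval{\phi \paren{a_{i+1}} - a_{i+1}} + \absval{\phi \paren{b_{i}} - b_{i}}$. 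Summing over all pieces (the boundary terms at $b_{-1}$ and $a_{N}$ vanish since $\phi$ fixes $0$ and $1$) recovers exactly the bound in the statement.

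I do not anticipate any serious obstacle. The only real design choice is the scaling factor $\alpha_{i}$, which must simultaneously match identity endpoints on the gap, preserve absolute continuity, and produce cancellations combining cleanly into the four prescribed summands; the affine choice above does all three.
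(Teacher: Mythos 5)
Your construction and bound are essentially identical to the paper's proof: the same affine rescaling of $\phi$ on the gap intervals (your unified $b_{-1}=0$, $a_{N}=1$ notation reproduces the paper's separate first and last pieces), the identity on each $\brack{a_{i},b_{i}}$, and the same piecewise estimate of $\rho$. The only cosmetic difference is that on $\brack{a_{i},b_{i}}$ you bound $\int \absval{\phi'-1}$ directly instead of invoking \Cref{RhoDistanceBound}; the argument is correct as written.
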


Before proving this lemma, we remark that the right-hand side of the inequality is jointly continuous in the $a_{i}$'s and $b_{i}$'s, and converges to $0$ as the $a_{i}$'s and $b_{i}$'s approach the $x_{i}$'s. Thus, for every $\epsilon > 0$, there is $\gamma > 0$ small enough so that for \emph{every} choice of $\paren{\brack{a_{i},b_{i}}}_{i<N}$ satisfying $\max_{i < N} \paren{b_{i} - a_{i}} < \gamma$, the corresponding $\psi$ satisfies $\rho_{I} \paren{\phi,\psi} < \epsilon$.

\begin{proof}
    Define $\psi \in H_{+}^{AC}$ by: $$\psi \paren{x} := \begin{cases}
        \frac{a_{0}}{\phi \paren{a_{0}}} \phi \paren{x} & \text{if $x \in \left[ 0, a_{0} \right)$,}\\
        b_{i} + \frac{a_{i+1} - b_{i}}{\phi \paren{a_{i+1}} - \phi \paren{b_{i}}} \paren{\phi \paren{x} - \phi \paren{b_{i}}} & \text{if $x \in \paren{b_{i}, a_{i+1}}$ for some $i < N$,}\\
        b_{N-1} + \frac{1 - b_{N-1}}{1 - \phi \paren{b_{N-1}}} \paren{\phi \paren{x} - \phi \paren{b_{N-1}}} & \text{if $x \in \left( b_{N-1}, 1 \right]$,}\\
        x & \text{if $x \in \bigcup_{i<N} \brack{a_{i},b_{i}}$.}
    \end{cases}$$
    Absolute continuity is preserved by piecewise-defined constructions with finite pieces, as well as by linear combinations, so $\psi \in H_{+}^{AC}$.

    We have $\rho_{I} \paren{\phi, \psi} = \rho_{0}^{a_{0}} \paren{\phi, \psi} + \sum_{i < N} \rho_{a_{i}}^{b_{i}} \paren{\phi, \psi} + \sum_{i<N-1} \rho_{b_{i}}^{a_{i+1}} \paren{\phi, \psi} + \rho_{b_{N-1}}^{1} \paren{\phi, \psi}$. Then for each $i < N-1$:
    \begin{align*}
        \rho_{b_{i}}^{a_{i+1}} \paren{\phi, \psi} &= \int_{b_{i}}^{a_{i+1}} \absval{{\psi}' - \phi'} \d \lambda\\
        &= \int_{b_{i}}^{a_{i+1}} \absval{\frac{a_{i+1} - b_{i}}{\phi \paren{a_{i+1}} - \phi \paren{b_{i}}} \phi' - \phi'} \d \lambda\\
        &= \absval{\frac{a_{i+1} - b_{i}}{\phi \paren{a_{i+1}} - \phi \paren{b_{i}}} - 1} \int_{b_{i}}^{a_{i+1}} \absval{\phi'} \d \lambda\\
        &= \absval{\frac{a_{i+1} - b_{i}}{\phi \paren{a_{i+1}} - \phi \paren{b_{i}}} - 1} \paren{\phi \paren{a_{i+1}} - \phi \paren{b_{i}}}\\
        &= \absval{a_{i+1} - \phi \paren{a_{i+1}} + \phi \paren{b_{i}} - b_{i}}\\
        &\leq \absval{a_{i+1} - \phi \paren{a_{i+1}}} + \absval{\phi \paren{b_{i}} - b_{i}}
    \end{align*}
    Similar calculations show that $\rho_{0}^{a_{0}} \paren{\phi, \psi} = \absval{\phi \paren{a_{0}} - a_{0}}$ and $\rho_{b_{N-1}}^{1} \paren{\phi, \psi} = \absval{\phi \paren{b_{N-1}} - b_{N-1}}$. Furthermore, by \Cref{RhoDistanceBound}, we have $\rho_{a_{i}}^{b_{i}} \paren{\phi, \psi} \leq \paren{\psi \paren{b_{i}} - \psi \paren{a_{i}}} + \paren{\phi \paren{b_{i}} - \phi \paren{a_{i}}} = \paren{b_{i} - a_{i}} + \paren{\phi \paren{b_{i}} - \phi \paren{a_{i}}}$ for each $i$.
    The result follows.
\end{proof}

\begin{thm}\label{HACIsGen2Gen}
    $H_{+}^{AC}$ is generically $2$-generated.
\end{thm}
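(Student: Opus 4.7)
The plan is to prove density of the set $\Omega_2$ of dense-generating pairs in $\paren{H_{+}^{AC}}^{2}$; combined with the earlier observation that $\Omega_2$ is $G_\delta$, this yields genericity. To that end, fix a countable basis $\brace{V_n : n < \omega}$ for $H_{+}^{AC}$. Given a nonempty open $U_1 \times U_2 \subseteq \paren{H_{+}^{AC}}^2$ and any $\epsilon > 0$, I aim to produce $\paren{f,g} \in U_1 \times U_2$ such that for every $n$, some word in $\brace{f,g}$ lies in $V_n$.

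My construction is a fusion-style argument producing a $\rho$-Cauchy sequence $\paren{f_k, g_k}$ starting from some $\paren{f_0, g_0} \in U_1 \times U_2$. Enumerate pairs $\paren{n_k, w_k}$, where the $w_k$ range over words in two letters and each index $n$ appears cofinally often. At stage $k$, I choose a small fresh interval $J_k \subseteq I$, disjoint from all previously used intervals, and perturb $\paren{f_{k-1}, g_{k-1}}$ only in a neighborhood of $J_k$, with the goal of arranging $w_k \paren{f_k, g_k} \in V_{n_k}$. The key technical tool here is \Cref{CanStuffAFixedIntervalInThere}: given a fixed point of $f_{k-1}$ or $g_{k-1}$, I can blow it up into a genuine fixed subinterval at arbitrarily small $\rho$-cost, after which I can freely redefine the generators on that subinterval without disturbing their previously secured behavior.

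On each fresh fixed subinterval I install a ``gadget'' --- essentially a rescaled copy of a target element drawn from $V_{n_k}$, together with whatever auxiliary behavior is needed so that $w_k$, evaluated on the modified pair, yields this target up to conjugation. Here \Cref{NoCommonFixedPointsCanMovePts} plays its role: provided $\paren{f_{k-1}, g_{k-1}}$ do not share all their fixed points (a property easily arranged initially and then preserved), the subgroup they generate can transport points into $J_k$, which makes the requisite conjugations possible. Choosing each $\absval{J_k}$ smaller than $2^{-k-2} \epsilon$, \Cref{RhoDistanceBound} bounds the $\rho$-cost of stage $k$ by a summable quantity, so the sequence converges to a limit $\paren{f, g}$ within $\rho$-distance $\epsilon$ of $\paren{f_0, g_0}$.

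I expect the main obstacle, relative to the corresponding result for $H_{+}$, to be that $\rho$ is strictly finer than the uniform metric: every inserted gadget contributes to the $L^{1}$-mass of derivative differences, so we cannot make global adjustments as in the uniform case but must confine all perturbations to small fixed subintervals produced by \Cref{CanStuffAFixedIntervalInThere}. A secondary delicate point is the openness requirement: once $w_k \paren{f_k, g_k} \in V_{n_k}$, later perturbations must be small enough in $\rho$ to keep the evaluation inside $V_{n_k}$. This is handled by the $\rho$-continuity of the word map $\paren{x,y} \mapsto w_k \paren{x,y}$ together with a sufficiently rapid decrease of $\absval{J_k}$, which also guarantees that the relevant gadgets can be realized inside $V_{n_k}$ --- since $V_{n_k}$ is $\rho$-open, it must contain elements differing from some designated $\rho$-close element only on an arbitrarily small subinterval, which is precisely what the construction can reproduce.
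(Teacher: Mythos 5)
There is a genuine gap, and it sits exactly where the real work of the theorem lies. Your fusion scheme confines each stage's perturbation to a tiny interval $J_k$ and installs there ``a rescaled copy of a target element'' so that a word evaluates to ``this target up to conjugation.'' But membership in $V_{n_k} = B_{\rho}\paren{\phi,\eta}$ is not conjugation-invariant: you need a word that is $\rho$-close to $\phi$ itself, and since $\rho$ dominates the uniform metric, such a word must move points of $I$ essentially wherever $\phi$ does. A homeomorphism differing from the identity (or from the previously built generators) only on a small interval $J_k$ can never do this when $\phi$ is far from the identity, and your closing justification --- that $V_{n_k}$ ``must contain elements differing from some designated $\rho$-close element only on an arbitrarily small subinterval, which is precisely what the construction can reproduce'' --- conflates two different things: the construction reproduces words differing from \emph{already available} words on a small interval, whereas membership in $V_{n_k}$ requires closeness to $\phi$, which is not already available. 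The missing mechanism is the expansion step that drives the paper's proof (following Akhmedov--Cohen): one encodes, once and for all, a finite dense-generating tuple of $H_{+}^{AC}\paren{\brack{x_1,x_0}}$ into $\tilde{f}$ along the ladder $x_n = \tilde{g}^{-n}\paren{x_0}$, and then, given a target $\phi$, uses \Cref{NoCommonFixedPointsCanMovePts} to find $h \in \Gamma$ with $h\paren{y_0} > 1-\gamma$ and an element $\Phi \in \Gamma$ carrying the tiny interval $\brack{h\paren{x_{n+1}},h\paren{x_n}}$ onto some $\brack{a,b}$ with $a < \gamma < 1-\gamma < b$; conjugating the encoded generators by $\Phi$ yields a dense subgroup of $H_{+}^{AC}\paren{\brack{a,b}}$, which approximates $\psi_{a,b}$ (the target flattened near the endpoints via \Cref{CanStuffAFixedIntervalInThere}), while the residual error on $\brack{0,a} \cup \brack{b,1}$ is at most $2a + 2\paren{1-b}$ by \Cref{RhoDistanceBound}. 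Without something playing this role --- group elements that blow a gadget interval up to almost all of $I$, together with control of the $\rho$-error near the endpoints --- your stages simply cannot meet basic open sets centered far from the identity.

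A second structural problem: if each $J_k$ is a fixed subinterval of the generators created by \Cref{CanStuffAFixedIntervalInThere} and is never touched at later stages (your intervals are pairwise disjoint and fresh), then any endpoint $c \in \paren{0,1}$ of $J_k$ that is fixed by \emph{both} final maps is fixed by every word, so no word can come $\rho$-closer to $\phi$ than $\absval{\phi\paren{c}-c}$; the generated subgroup is then not dense at all. The paper avoids this by arranging that $\tilde{f}$ and $\tilde{g}$ share no fixed point in $\paren{0,1}$, which is precisely the hypothesis that makes \Cref{NoCommonFixedPointsCanMovePts} usable; your proposal acknowledges the hypothesis but the construction as described tends to destroy it. Finally, a smaller point: the requirement for density is that for each $n$ there \emph{exists} a word landing in $V_n$, so pre-assigning a fixed word $w_k$ to stage $k$ is either vacuous or unachievable (for instance, the single-letter word must remain $\rho$-near $f_0$, so it cannot be steered into an arbitrary $V_{n_k}$). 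The fusion bookkeeping itself (continuity of word maps, summable interval lengths) is fine, but it is not where the difficulty of the theorem lies.
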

\begin{proof}
    Let $f,g \in H_{+}^{AC}$, and let $\delta > 0$. We show there are $\tilde{f}$ and $\tilde{g}$ such that $\rho \paren{f,\tilde{f}} < \delta$, $\rho \paren{g,\tilde{g}} < \delta$, and $\angle{\tilde{f}, \tilde{g}}$ is dense. It suffices to assume $f$ and $g$ have no fixed points in common other than $0$ and $1$, since the set of pairs $\paren{f,g}$ with this property is dense in $\paren{H_{+}^{AC}}^{2}$.

    By continuity, fix $\alpha > 0$ small enough so that $x \leq \alpha$ implies $f \paren{x} < \frac{\delta}{2}$ and $g \paren{x} < \frac{\delta}{2}$. We define $y_{0} \in I$ and $\tilde{g} \in H_{AC}^{+}$ as follows; let $y_{0} := \frac{1}{2} \min \brace{\alpha, g \paren{\alpha}, f \paren{\alpha}}$. Then define $\tilde{g}$ by setting $\tilde{g} \dhr_{\brack{\alpha,1}} = g \dhr_{\brack{\alpha,1}}$, and letting $\tilde{g} \dhr_{\brack{0,\alpha}}$ be the piecewise-linear map connecting the points $\paren{0,0}$, $\paren{\frac{y_{0}}{3}, \frac{2y_{0}}{3}}$, $\paren{y_{0}, y_{0}}$, and $\paren{\alpha, g \paren{\alpha}}$. Then $\tilde{g}$ is absolutely continuous, since $g$ is, and piecewise-linear maps are also. Furthermore, $\tilde{g} \dhr_{\brack{\alpha,1}} = g \dhr_{\brack{\alpha,1}}$ implies $\rho_{I} \paren{g,\tilde{g}} = \rho_{0}^{\alpha} \paren{g,\tilde{g}} \leq g \paren{\alpha} - g \paren{0} + \tilde{g} \paren{\alpha} - \tilde{g} \paren{0} = 2 g \paren{\alpha} < \delta$ by \Cref{RhoDistanceBound}.
    
    We now define $\tilde{f}$, as follows. Let $x_{0}$ be any element of $\paren{0,y_{0}}$, and for each $n > 0$, let $x_{n} := \tilde{g}^{-n} \paren{x_{0}}$. Since $\tilde{g} \paren{x} > x$ whenever $0 < x < y_{0}$, it follows that the sequence $\paren{x_{n}}_{n < \omega}$ strictly decreases and converges to $0$. Since $H_{+}^{AC} \paren{\brack{x_{1}, x_{0}}} \isom H_{+}^{AC} \paren{I}$, which is topologically $N$-generated for some $N$ (say, $N = 10$), we may take $\phi_{0}, \phi_{1}, \ldots, \phi_{N-1} \in H_{+}^{AC} \paren{\brack{x_{1}, x_{0}}}$ which generate a dense subgroup. We define $\tilde{f}$ by:
    \begin{itemize}
        \item $\tilde{f} \dhr_{\brack{\alpha,1}} = f \dhr_{\brack{\alpha,1}}$;
        \item $\tilde{f} \dhr_{\brack{x_{0}, \alpha}}$ is any absolutely continuous interpolating map between $\paren{x_{0}, x_{0}}$ and $\paren{\alpha, f \paren{\alpha}}$ which shares no fixed points with $\tilde{g}$;
        \item $\tilde{f} \dhr_{\brack{x_{kN + i + 1}, x_{kN + i}}} = \tilde{g}^{-(kN+i)} \circ \phi_{i} \circ \tilde{g}^{kN+i}$ for all $k < \omega$ and $0 \leq i < N$.
    \end{itemize}
    By the same argument used for $\tilde{g}$, since $\tilde{f}$ agrees with $f$ on $\brack{\alpha,1}$, we have $\rho_{I} \paren{\tilde{f},f} \leq 2 f \paren{\alpha} < \delta$.
    
    We claim that for any $n$, the set $\brace{\paren{\tilde{g}^{i} \circ \tilde{f} \circ \tilde{g}^{-i}} \dhr_{\brack{x_{n+1}, x_{n}}} : i < N}$ generates a dense subgroup of $H_{+}^{AC} \paren{\brack{x_{n+1}, x_{n}}}$. Indeed, one can show by induction that this set is equal to $\brace{\tilde{g}^{n} \circ \phi_{i} \circ \tilde{g}^{-n} : i < N}$. This is just the image of the set $\brace{\phi_{i} : i < N}$ under conjugation by $\tilde{g}^{n}$, which is a topological group isomorphism $H_{+}^{AC} \paren{\brack{x_{1},x_{0}}} \to H_{+}^{AC} \paren{\brack{x_{n+1}, x_{n}}}$.

    It remains to check that $\Gamma := \angle{\tilde{f}, \tilde{g}}$ is dense in $H_{+}^{AC}$. Fix $\phi \in H_{+}^{AC}$ and $\epsilon > 0$; we will find a map $W \in \Gamma$ with $\rho_{I} \paren{W,\phi} < \epsilon$. By applying \Cref{CanStuffAFixedIntervalInThere} (and the remark preceding its proof) to the fixed points $0$ and $1$, we may choose $\gamma > 0$ satisfying $\gamma < \min \brace{\frac{1}{2},\frac{\epsilon}{12}}$, and for all $a,b \in I$ with $a < \gamma$ and $1-\gamma < b$, we have $\rho_{I} \paren{\phi,\psi_{a,b}} < \frac{\epsilon}{3}$ where $\psi_{a,b} \in H_{+}^{AC}$ is the result of applying \Cref{CanStuffAFixedIntervalInThere} to the intervals $\brack{0,a}$ and $\brack{b,1}$.

    By construction, $\tilde{f}$ and $\tilde{g}$ do not share a fixed point in $\paren{0,1}$. Therefore by \Cref{NoCommonFixedPointsCanMovePts}, $\sup \brace{h \paren{y_{0}} : h \in \Gamma} = 1$, and so there is $h \in \Gamma$ for which $h \paren{y_{0}} > 1 - \gamma$. Set $F := h \tilde{f} h^{-1} \in \Gamma$ and $G := h \tilde{g} h^{-1} \in \Gamma$. Then we have $\Fix{F} = h \brack{\Fix{\tilde{f}}}$ and $\Fix{G} = h \brack{\Fix{\tilde{g}}}$.

    Choose $n$ large enough so that $h \paren{x_{n+1}} < \gamma$. Since $F$ has no fixed points on the interval $h \brack{\left( x_{0}, y_{0} \right]} = \left( h \paren{x_{0}}, h \paren{y_{0}} \right]$, and this interval contains $G \paren{h \paren{x_{0}}}$, we may find an integer $m$ such that $F^{m} \paren{G \paren{h \paren{x_{0}}}} > h \paren{y_{0}}$. Set $\Phi := G^{-(n+1)} F^{m} G^{n+1} (= h \tilde{g}^{-(n+1)} \tilde{f}^{m} \tilde{g}^{n+1} h^{-1}) \in \Gamma$. Then one calculates that $\Phi$ fixes $h \paren{x_{n+1}}$, and $\Phi \paren{h \paren{x_{n}}} > h \paren{y_{0}}$. Then if $a := \Phi \paren{h \paren{x_{n+1}}}$ and $b := \Phi \paren{h \paren{x_{n}}}$, we have: $$a = h \paren{x_{n+1}} < \gamma < 1-\gamma < h \paren{y_{0}} < b.$$

    Since $\brace{\paren{\tilde{g}^{i} \tilde{f} \tilde{g}^{-i}} \dhr_{\brack{x_{n+1}, x_{n}}} : i < N}$ generates a dense subgroup of $H_{+}^{AC} \paren{\brack{x_{n+1}, x_{n}}}$, then $\brace{\paren{\Phi G^{i} F G^{-i} \Phi^{-1}} \dhr_{\brack{a,b}} : i < N}$ generates a dense subgroup of $H_{+}^{AC} \paren{\brack{a,b}}$ (this is just applying the isomorphism of conjugation by $\Phi h$).

    Note that $\psi_{a,b} \dhr_{\brack{a,b}} \in H_{+}^{AC} \paren{\brack{a,b}}$ since $\psi_{a,b}$ fixes $a$ and $b$. Thus, let $w \in \mathbb{F}_{N}$ be a word on $N$ letters such that $\rho_{a}^{b} \paren{W, \psi_{a,b}} < \frac{\epsilon}{3}$, where $W := w \paren{\Phi F \Phi^{-1}, \ldots, \Phi G^{N-1} F G^{-(N-1)} \Phi^{-1}} \in \Gamma$. Then since $W$ and $\psi_{a,b}$ both fix $a$ and $b$, by \Cref{RhoDistanceBound} we have $\rho_{0}^{a} \paren{W,\psi_{a,b}} \leq W \paren{a} - W \paren{0} + \psi_{a,b} \paren{a} - \psi_{a,b} \paren{0} = 2a$, and similarly $\rho_{b}^{1} \paren{W,\psi_{a,b}} \leq 2 \paren{1-b}$. Thus:

    \begin{align*}
        \rho_{I} \paren{W,\phi} &\leq \rho_{I} \paren{W,\psi_{a,b}} + \rho_{I} \paren{\psi_{a,b}, \phi}\\
        &< \rho_{I} \paren{W,\psi_{a,b}} + \frac{\epsilon}{3}\\
        &= \rho_{0}^{a} \paren{W,\psi_{a,b}} + \rho_{a}^{b} \paren{W,\psi_{a,b}} + \rho_{b}^{1} \paren{W,\psi_{a,b}} + \frac{\epsilon}{3}\\
        &< \rho_{0}^{a} \paren{W,\psi_{a,b}} + \frac{\epsilon}{3} + \rho_{b}^{1} \paren{W,\psi_{a,b}} + \frac{\epsilon}{3}\\
        &\leq 2 \paren{a + (1-b)} + \frac{2\epsilon}{3}\\
        &< 2 \paren{2 \gamma} + \frac{2\epsilon}{3}\\
        &< 2 \paren{2 \frac{\epsilon}{12}} + \frac{2\epsilon}{3} = \epsilon. \qedhere
    \end{align*}

\end{proof}

\section{Conjugacy classes}\label{sec:ConjClasses}

The main result of this section is the existence of a comeagre conjugacy class of $H_{+}^{AC}$, as well as a characterization of its elements. For comparison, we first recall the corresponding result for $H_{+}$.

\begin{defn}
    We define the following subsets of $H_{+}$:
    \begin{itemize}
        \item $C$ is the set of all $f \in H_{+}$ for which the orbitals of nonzero parity form a dense linear order without endpoints (under the natural ordering), and moreover, the set of orbitals of positive parity and the set of orbitals of negative parity are dense subsets of this order.
        \item $P := \brace{f \in H_{+} : \textnormal{$\Fix{f}$ is a perfect set}}$.
        \item $T := \brace{f \in H_{+} : \textnormal{$\Fix{f}$ is totally disconnected}}$.
        \item $N := \brace{f \in H_{+} : \textnormal{$\Fix{f}$ is null}}$.
    \end{itemize}
\end{defn}

\begin{thm}[folklore; see \cite{Hjo00}]
    $C$ is a dense $G_{\delta}$ conjugacy class in $H_{+}$.
\end{thm}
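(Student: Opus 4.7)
The plan is to verify three claims about $C$: that it is conjugacy-invariant, that any two of its elements are conjugate, and that it is $G_{\delta}$ and dense in $H_{+}$. Conjugacy invariance is immediate, since conjugation by $h \in H_{+}$ sends $\Fix{f}$ to $h \brack{\Fix{f}} = \Fix{hfh^{-1}}$ in an order-preserving way, and sends orbitals of $f$ to orbitals of $hfh^{-1}$ while preserving parity (as $h$ is increasing).

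For the single-class claim, I would use that the orbital structure of each $f \in C$ is canonically a countable $2$-coloured dense linear order without endpoints in which both colour classes are dense. A $2$-coloured Cantor back-and-forth argument then yields, for any $f, g \in C$, an order- and colour-preserving bijection $\sigma$ between their orbital sets. To build a conjugator $h \in H_{+}$ with $g = hfh^{-1}$, I would proceed orbital by orbital: for each matched pair $(a, b) \leftrightarrow (c, d)$ of orbitals of equal parity, choose fundamental domains $\brack{x_{0}, f \paren{x_{0}}} \subseteq (a, b)$ and $\brack{y_{0}, g \paren{y_{0}}} \subseteq (c, d)$, pick any increasing homeomorphism between them, and extend to all of $(a, b)$ by the forced equivariance $h \circ f^{n} = g^{n} \circ h$. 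The map $h$ extends continuously over $\Fix{f}$, since the orbital endpoints are already pinned down by the combinatorics and the remaining points of $\Fix{f}$ are order-theoretic limits of such endpoints.

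To show $C$ is $G_{\delta}$ I would express it as a countable intersection of open conditions indexed by rational pairs $p < q$ in $I$: (a) $f \dhr_{\brack{p, q}} \neq \id$, which captures total disconnectedness of $\Fix{f}$; (b) the disjunction \enquote{$f > \id$ on $\brack{p, q}$, or $f < \id$ on $\brack{p, q}$, or there exist $x, y \in (p, q)$ with $f(x) > x$ and $f(y) < y$}, which ensures that once $(p, q)$ meets $\Fix{f}$ nontrivially, both parities are present there; plus finer such conditions on smaller rational intervals to rule out isolated fixed points. Density of $C$ in $H_{+}$ is obtained by an explicit perturbation: given $f \in H_{+}$ and $\epsilon > 0$, one approximates $f$ uniformly by a map $g \in C$ constructed by inserting a Cantor set of fixed points together with densely interleaved bumps of both parities, all modifications being localized to sub-intervals of diameter $< \epsilon$. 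The main obstacle will be the $G_{\delta}$ formulation of the \enquote{no isolated fixed points} condition: the assignment $f \mapsto \Fix{f}$ is not continuous in any natural topology, so direct quantification over fixed points is neither open nor closed, and the workaround is to phrase everything in terms of open conditions on restrictions $f \dhr_{\brack{p, q}}$ for rational $p, q$. By contrast, the back-and-forth and the density perturbation are structurally routine.
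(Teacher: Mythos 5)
The paper does not actually prove this statement --- it is quoted as folklore with a pointer to Hjorth --- so your proposal can only be measured against the standard argument. Your treatment of conjugacy invariance, of the single-class claim (back-and-forth on the two-coloured dense order of orbitals, fundamental domains inside each orbital, extension over $\Fix{f}$ by monotone limits, which works because $\Fix{f}$ is nowhere dense so the orbitals are dense in $I$), and of density (local insertion of Cantor sets of fixed points with interleaved bumps) is the standard route and is fine as an outline.

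The genuine gap is in the $G_{\delta}$ step, exactly where you flagged the difficulty. The specific family of open conditions you wrote down does not cut out $C$: condition (b) fails for legitimate elements of $C$. Concretely, $C$ is invariant under conjugation in $H_{+}$, so it contains some $f$ having an orbital $\paren{p,b}$ whose left endpoint $p$ is rational; pick a rational $q \in \paren{p,b}$. For the pair $\paren{p,q}$ you have $f \paren{p} = p$, so ``$f > \id$ on $\brack{p,q}$'' fails, ``$f < \id$ on $\brack{p,q}$'' fails, and there is no $y \in \paren{p,q}$ with $f \paren{y} < y$ since $\paren{p,q}$ lies in a single positive orbital --- so $f$ is excluded from your intersection even though $f \in C$. (A quick sanity check that something is off: your set is not conjugation-invariant, since it depends on whether orbital endpoints are rational, while $C$ is.) The same phenomenon at the endpoints $0,1$ and the vagueness of ``plus finer such conditions to rule out isolated fixed points'' mean the presentation needs real repair, not just tweaking. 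The standard fix is to drop the insistence that each indexed condition be open: sets of the form (closed or $G_{\delta}$) $\cup$ (open) are $G_{\delta}$, and finite unions and countable intersections of $G_{\delta}$ sets are $G_{\delta}$. One can then use, for every rational subinterval $J$ of $I$ (including those abutting $0$ and $1$): (A) some point of $J$ is moved; (B) if $J$ contains a fixed point, then $J$ contains points of both parities; (C) if $J$ contains points of both parities, then it contains a point moved up to the left of a point moved down \emph{and} a point moved down to the left of a point moved up. Checking that this intersection is exactly the class takes an argument (e.g.\ (B)+(C) is what kills isolated fixed points, whatever the parities of the flanking orbitals). Alternatively, prove only that the class is comeagre and invoke the general fact that a non-meagre orbit of a Polish group action is $G_{\delta}$; but as written, your conditions neither contain $C$ nor characterize it, so the $G_{\delta}$ claim is not established.
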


We first show that $C \cap H_{+}^{AC}$ is comeagre in $H_{+}^{AC}$.

\begin{lem}\label{PerfectFixedSetIsGeneric}
    $P \cap H_{+}^{AC}$ is comeagre in $H_{+}^{AC}$.
\end{lem}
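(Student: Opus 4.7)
The plan is to verify the two standard criteria. First, $P \cap H_{+}^{AC}$ is $G_{\delta}$ in $H_{+}^{AC}$: since $P$ is $G_{\delta}$ in $H_{+}$ by the folklore result mentioned in the introduction, and the inclusion $H_{+}^{AC} \hookrightarrow H_{+}$ is continuous, this is immediate. The real content is density.

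For density, fix $f \in H_{+}^{AC}$ and $\epsilon > 0$; I will construct $g \in H_{+}^{AC}$ with $\rho \paren{f, g} < \epsilon$ and $\Fix{g}$ perfect. Enumerate the (at most countably many) orbitals of $f$ as $\brace{\paren{a_{k}, b_{k}}}_{k \in K}$. For each $k$, I pick $a_{k} < \alpha_{k} < \beta_{k} < b_{k}$ and let $T_{k} \from \brack{\alpha_{k}, \beta_{k}} \to \brack{a_{k}, b_{k}}$ be the orientation-preserving affine bijection. Define $g$ to be the identity on the ``shoulders'' $\brack{a_{k}, \alpha_{k}}$ and $\brack{\beta_{k}, b_{k}}$ for each $k$ and on the complement of $\bigcup_{k} \paren{a_{k}, b_{k}}$, and $g = T_{k}^{-1} \circ f \circ T_{k}$ on each middle $\brack{\alpha_{k}, \beta_{k}}$. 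The pieces match at their boundaries (e.g.\ the middle formula sends $\alpha_{k}$ to $T_{k}^{-1} \paren{f \paren{a_{k}}} = T_{k}^{-1} \paren{a_{k}} = \alpha_{k}$), so $g$ is a continuous strictly increasing bijection of $\brack{0,1}$; standard piecewise-AC considerations show that both $g$ and $g^{-1}$ are absolutely continuous, so $g \in H_{+}^{AC}$.

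The key property of the affine conjugation is that $g$ on $\brack{\alpha_{k}, \beta_{k}}$ is conjugate (via $T_{k}$) to $f$ on $\brack{a_{k}, b_{k}}$, which has no fixed points on $\paren{a_{k}, b_{k}}$; so $g$ has no fixed points on $\paren{\alpha_{k}, \beta_{k}}$. Hence $\Fix{g} = \Fix{f} \cup \bigcup_{k} \paren{\brack{a_{k}, \alpha_{k}} \cup \brack{\beta_{k}, b_{k}}}$, and this has no isolated points: interior points of a shoulder are trivially limit points; an isolated point of $\Fix{f}$ can only occur as an orbital endpoint bordered by orbitals on both sides, and so becomes a limit of the newly-added shoulders on both sides; a non-isolated point of $\Fix{f}$ remains non-isolated in $\Fix{g} \supseteq \Fix{f}$.

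It remains to bound $\rho \paren{f, g} = \int_{0}^{1} \absval{f' - g'} \d \lambda$ by a suitable choice of $\alpha_{k}, \beta_{k}$. On each shoulder $\brack{a_{k}, \alpha_{k}}$ and $\brack{\beta_{k}, b_{k}}$ the integrand is $\absval{f' - 1}$, whose integral over that shoulder tends to $0$ as $\alpha_{k} \to a_{k}^{+}$ (respectively $\beta_{k} \to b_{k}^{-}$) by absolute continuity of the measure $\absval{f' - 1} \d \lambda$. On each middle $\brack{\alpha_{k}, \beta_{k}}$ one computes $g' \paren{x} = f' \paren{T_{k} \paren{x}}$, so the integrand is $\absval{f' - f' \circ T_{k}}$; since $T_{k}$ tends to the identity uniformly as $\alpha_{k} \to a_{k}$ and $\beta_{k} \to b_{k}$, this integral tends to $0$ by $L_{1}$-continuity of composition with affine maps (a standard approximation argument using density of continuous functions in $L_{1}$). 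Choosing the $\alpha_{k}, \beta_{k}$ so that the $k$-th orbital contributes less than $\epsilon / 2^{k}$ yields $\rho \paren{f, g} < \epsilon$. The main technical obstacle is this $L_{1}$-continuity estimate, which is standard but requires some care to handle the simultaneous stretching and shifting performed by the $T_{k}$.
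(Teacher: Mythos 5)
There is a genuine gap, and it is in the very first step. You assert that $P$ is $G_{\delta}$ in $H_{+}$ ``by the folklore result mentioned in the introduction,'' but that result concerns the conjugacy class $C$ (orbitals of nonzero parity forming a dense linear order, etc.), not the set $P$ of maps with perfect fixed-point set; nothing cited in the paper gives the $G_{\delta}$-ness of $P$, and it is not at all clear it holds. The natural computation gives only this: $\Fix{f}$ is perfect iff for every rational interval $U$ the set $\Fix{f} \cap U$ is not a singleton, and ``exactly one fixed point in $U$'' is the intersection of an $F_{\sigma}$ condition (at least one fixed point in $U$) with a $G_{\delta}$ condition (no two disjoint closed subintervals of $U$ both meeting $\Fix{f}$), so on its face $P$ is only $\Pi^{0}_{3}$. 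This is precisely why the paper's proof does \emph{not} argue ``$G_{\delta}$ plus dense'': it writes $P \cap H_{+}^{AC} = \bigcap_{U} D_{U}$ with $D_{U} = \brace{f : \Fix{f} \cap U \text{ is not a singleton}}$ and shows each $D_{U}$ has dense \emph{interior}, by blowing a fixed point up to a fixed interval inside $U$ (\Cref{CanStuffAFixedIntervalInThere}) and then inserting a zig-zag that crosses the diagonal at three points with strict inequalities, so that every sufficiently $\rho$-close map still has two fixed points in $U$ by the intermediate value theorem. Your $g$, by contrast, has honest fixed intervals and no stable crossings: arbitrarily small perturbations can shrink a fixed shoulder to a single fixed point, so $g$ does not lie in the interior of the relevant sets. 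As written, your argument establishes only that $P \cap H_{+}^{AC}$ is dense, which does not yield comeagreness.

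The density construction itself is essentially sound and is a nice global alternative to the paper's local perturbation (the derivative computation $g' = f' \circ T_{k}$ on the middles and the $L_{1}$-continuity estimate are correct), and it could be repaired to prove the lemma if you replace the identity shoulders by small stable wiggles and then run the dense-interior argument for each $D_{U}$. Two smaller points would also need attention. First, gluing \emph{countably} many absolutely continuous pieces is not automatically absolutely continuous --- the paper only invokes closure under finite piecewise constructions; here it is true, but you need an argument, e.g.\ Banach--Zarecki via Luzin's property (N) (each piece maps null sets to null sets and $g$ is continuous and monotone), or a direct verification of the fundamental theorem of calculus as in the proof of \Cref{HACHasGenerics}. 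Second, your decomposition of $\int_{0}^{1} \absval{f' - g'} \d\lambda$ into shoulder and middle contributions silently drops the set $\Fix{f}$, which may have positive measure; its contribution does vanish, since all but countably many points of $\Fix{f}$ are two-sided limit points of $\Fix{f}$ and hence $f' = g' = 1$ almost everywhere on it, but this should be stated.
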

\begin{proof}
    Fix any countable open basis $\mathcal{U}$ for $I$. Given $U \in \mathcal{U}$, let $D_{U}$ denote the set of all $f \in H_{+}^{AC}$ such that $\Fix{f} \cap U$ is not a singleton. Then $P \cap H_{+}^{AC} = \bigcap_{U \in \mathcal{U}} D_{U}$, so it suffices to show each $D_{U}$ has dense interior.

    Fix $f \in H_{+}^{AC}$ and $\epsilon > 0$; we will show that $B_{\rho} \paren{f,\epsilon}$ intersects the interior of $D_{U}$. If $\Fix{g} \cap U = \emptyset$ for all $g$ with $\rho \paren{f,g} < \frac{\epsilon}{2}$, then $B_{\rho} \paren{f,\frac{\epsilon}{2}} \subseteq D_{U}$ and we are done. Otherwise, suppose there is $g \in B_{\rho} \paren{f,\frac{\epsilon}{2}}$ and $x_{0} \in I$ such that $x_{0} \in \Fix{g} \cap U$. By \Cref{CanStuffAFixedIntervalInThere}, we can take $a < x_{0} < b$ and $\psi \in H_{+}^{AC}$ so that $\brack{a,b} \subseteq U$ and $\rho \paren{g, \psi} + 2 \paren{b-a} < \frac{\epsilon}{2}$. We define $h \in H_{+}^{AC}$ by letting $h$ and $\psi$ agree on the set $I \setminus \paren{a,b}$, and letting $h \dhr_{\brack{a,b}}$ be any map in $H_{+}^{AC} \paren{\brack{a,b}}$ that satisfies $h \paren{y_{0}} > y_{0}$, $h \paren{y_{1}} < y_{1}$, and $h \paren{y_{2}} > y_{2}$ for some points $a < y_{0} < y_{1} < y_{2} < b$.\footnote{For example, we could let $h \dhr_{\brack{a,b}}$ be the piecewise linear map connecting the points $\paren{a,a}$, $\paren{\frac{3a+b}{4}, \frac{2a+b}{3}}$, $\paren{\frac{a+b}{2}, \frac{3a+2b}{5}}$, $\paren{\frac{a+3b}{4}, \frac{a+4b}{5}}$, and $\paren{b,b}$.}

    Note that $\psi$ and $h$ differ only on the set $\paren{a,b}$, so $\rho_{I} \paren{\psi, h} = \rho_{a}^{b} \paren{\psi, h} \leq 2 \paren{b-a}$. Thus, $\rho \paren{f,h} \leq \rho \paren{f,g} + \rho \paren{g,\psi} + \rho \paren{\psi, h} \leq \frac{\epsilon}{2} + \rho \paren{g,\psi} + 2 \paren{b-a} < \frac{\epsilon}{2} + \frac{\epsilon}{2} = \epsilon$, and so $h \in B_{\rho} \paren{f,\epsilon}$. Moreover, we claim $h \in \intr \paren{D_{U}}$; indeed, since the topology on $H_{+}^{AC}$ is finer than the uniform convergence topology, take $\delta > 0$ small enough so that $\rho \paren{h,k} < \delta$ implies $k \paren{y_{0}} > y_{0}$, $k \paren{y_{1}} < y_{1}$, and $k \paren{y_{2}} > y_{2}$. Then for all $k \in B_{\rho} \paren{h,\delta}$, by the Intermediate Value Theorem there will be points $x_{1}, x_{2} \in \Fix{k}$ such that $y_{0} < x_{1} < y_{1} < x_{2} < y_{2}$. Then $x_{1},x_{2} \in \paren{y_{0},y_{2}} \subseteq \brack{a,b} \subseteq U$ witnesses $k \in D_{U}$. It follows that $B_{d} \paren{h,\delta} \subseteq D_{U}$, and so $h \in B_{d} \paren{f,\epsilon} \cap \intr \paren{D_{U}}$ as desired.
\end{proof}

\begin{lem}\label{TDFixedSetIsGeneric}
    $T \cap H_{+}^{AC}$ is dense $G_{\delta}$ in $H_{+}^{AC}$.
\end{lem}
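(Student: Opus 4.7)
The plan is to write $T \cap H_{+}^{AC}$ as a countable intersection of dense open subsets of $H_{+}^{AC}$ and conclude by Baire category. Structurally the argument parallels \Cref{PerfectFixedSetIsGeneric}, but is considerably simpler: here one wants to \emph{destroy} a fixed interval, which is achieved by a small local perturbation on a very short subinterval, rather than engineering a perfect set of fixed points.

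First I would fix a countable basis $\mathcal{U}$ for $I$ consisting of nonempty open intervals, and for each $U \in \mathcal{U}$ set $E_{U} := \brace{f \in H_{+}^{AC} : U \not\subseteq \Fix{f}}$. Since a closed subset of $\R$ is totally disconnected iff it contains no nondegenerate interval, one has $T \cap H_{+}^{AC} = \bigcap_{U \in \mathcal{U}} E_{U}$. Each $E_{U}$ is open in $H_{+}^{AC}$: it equals $\bigcup_{x \in U} \brace{f \in H_{+}^{AC} : f \paren{x} \neq x}$, and each set in this union is already open in $H_{+}$, hence open in $H_{+}^{AC}$ via continuity of the inclusion $H_{+}^{AC} \hookrightarrow H_{+}$.

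The one substantive step is to verify density of each $E_{U}$. Given $f \in H_{+}^{AC}$ and $\epsilon > 0$, I may assume $f \notin E_{U}$, so $f$ is the identity on $U$. I would choose a closed subinterval $\brack{a,b} \subseteq U$ with $2 \paren{b - a} < \epsilon$ and pick any $\psi \in H_{+}^{AC} \paren{\brack{a,b}}$ that fixes $a$ and $b$ but is not the identity (for instance a piecewise linear bump). Define $\tilde{f}$ to agree with $f$ on $I \setminus \paren{a,b}$ and with $\psi$ on $\brack{a,b}$; the two pieces match at $a$ and $b$ because $f \dhr_{\brack{a,b}} = \id_{\brack{a,b}}$ and $\psi$ fixes the endpoints. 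Piecewise-AC gluing across matching endpoint values preserves absolute continuity, so $\tilde{f} \in H_{+}^{AC}$, and clearly $\tilde{f} \in E_{U}$. Applying \Cref{RhoDistanceBound} on $\brack{a,b}$ yields
$$\rho_{I} \paren{f,\tilde{f}} = \rho_{a}^{b} \paren{\id_{\brack{a,b}}, \psi} \leq 2 \paren{b - a} < \epsilon.$$

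Therefore $T \cap H_{+}^{AC} = \bigcap_{U \in \mathcal{U}} E_{U}$ is a countable intersection of dense open subsets of the Polish space $H_{+}^{AC}$, so it is dense $G_{\delta}$ by Baire category. I do not foresee any real obstacle: the only mildly delicate point is checking $\tilde{f} \in H_{+}^{AC}$ after cutting and pasting, which is standard for finite piecewise-AC constructions.
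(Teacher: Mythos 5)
Your argument is correct, and the $G_{\delta}$ half coincides with the paper's: both reduce total disconnectedness of the closed set $\Fix{f}$ to the countable family of conditions $U \nsubseteq \Fix{f}$ over a countable basis, and both get openness of each such condition from the uniform topology on $H_{+}$ together with continuity of the inclusion $H_{+}^{AC} \hookrightarrow H_{+}$. Where you diverge is the density step. The paper does not argue density of each $E_{U}$ separately; it simply observes that $T$ contains all non-identity strictly increasing polynomials fixing $0$ and $1$, and invokes density of that family in $H_{+}^{AC}$, so that $T \cap H_{+}^{AC}$ is dense and $G_{\delta}$, hence comeagre. Your route instead perturbs a given $f$ locally: if $f$ is the identity on $U$, you graft a short bump $\psi$ onto a subinterval $\brack{a,b} \subseteq U$ with $2\paren{b-a} < \epsilon$ and bound $\rho_{I}\paren{f,\tilde{f}} = \rho_{a}^{b}\paren{\id,\psi} \leq 2\paren{b-a}$ via \Cref{RhoDistanceBound}, exactly the kind of cut-and-paste the paper itself uses in \Cref{PerfectFixedSetIsGeneric}. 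The trade-off: the paper's proof is shorter but leans on the (unproved, $\rho$-topology) assertion that such polynomials are dense in $H_{+}^{AC}$, whereas your version is self-contained and only uses the finite piecewise-AC gluing and \Cref{RhoDistanceBound}; on the other hand, it proves density of every $E_{U}$, which is slightly more than needed once one knows a single dense subset of $T \cap H_{+}^{AC}$. Both are valid, and your endpoint-matching and inverse-map checks (needed for $\tilde{f} \in H_{+}^{AC}$) are handled correctly.
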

\begin{proof}
    We claim $T$ is $G_{\delta}$. A closed subset of $\R$ is totally disconnected iff it is nowhere dense. Thus, if $\mathcal{U}$ is a countable open basis for $I$, we have $f \in T$ if and only if $U \nsubseteq \Fix{f}$ for all $U \in \mathcal{U}$.  Thus it suffices to show $T_{U} := \brace{f \in H_{+} : U \nsubseteq \Fix{f}}$ is open in $H_{+}$. Fix $U \in \mathcal{U}$, and let $f \in T_{U}$. Then there is some $x_{0} \in U \setminus \Fix{f}$, i.e. $f \paren{x_{0}} \neq x_{0}$. We may choose $\delta > 0$ so that $\norm{f - g}_{\infty} < \delta$ implies $g \paren{x_{0}} \neq x_{0}$. Then $g \in T_{U}$; it follows that $B_{\mathrm{unif}} \paren{f,\delta} \subseteq T_{U}$, so $T_{U}$ is open as desired.

    Then $T$ is $G_{\delta}$ in $H_{+}$, and so $T \cap H_{+}^{AC}$ is $G_{\delta}$ in $H_{+}^{AC}$. Then density is immediate from observing that $T$ contains the set of all non-identity, strictly-increasing polynomials passing through $\paren{0,0}$ and $\paren{1,1}$, which is dense in $H_{+}^{AC}$.
\end{proof}

\begin{cor}
    For comeagrely-many $f \in H_{+}^{AC}$, $\brace{\orb{f}{x} : \textnormal{$x$ has non-zero parity}}$ is a dense linear order without endpoints.
\end{cor}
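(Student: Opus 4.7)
My plan is to deduce this corollary directly from \Cref{PerfectFixedSetIsGeneric} and \Cref{TDFixedSetIsGeneric}. Their conjunction gives that for comeagrely-many $f \in H_{+}^{AC}$, the set $\Fix{f}$ is both perfect and totally disconnected, so it suffices to prove the purely combinatorial statement: whenever $f \in H_{+}$ has $\Fix{f}$ perfect and totally disconnected, the collection of orbitals of nonzero parity---equivalently, the connected components of $I \setminus \Fix{f}$---forms a dense linear order without endpoints under the natural ordering inherited from $I$.

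The verification splits into two parts that each use perfection and total disconnection in a symmetric way. For density, I would take two such orbitals $\paren{a_{1},b_{1}} < \paren{a_{2},b_{2}}$, so $b_{1} \leq a_{2}$, and distinguish two cases. When $b_{1} < a_{2}$, total disconnection of $\Fix{f}$ prevents $\brack{b_{1},a_{2}}$ from lying entirely in $\Fix{f}$, yielding an intermediate orbital of nonzero parity. When $b_{1} = a_{2}$, the shared endpoint is isolated in $\Fix{f}$ (since $\paren{a_{1},b_{2}} \cap \Fix{f} = \brace{b_{1}}$), contradicting perfection. For the absence of endpoints, a hypothetical least orbital $\paren{a_{0},b_{0}}$ would force $\brack{0,a_{0}} \subseteq \Fix{f}$; if $a_{0} > 0$ this contradicts total disconnection, while if $a_{0} = 0$ then $0$ is an isolated point of $\Fix{f}$, contradicting perfection. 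The greatest-orbital case is handled identically at the right endpoint.

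There is no real obstacle: the genuine content has already been spent in establishing \Cref{PerfectFixedSetIsGeneric} and \Cref{TDFixedSetIsGeneric}, and this corollary amounts to a repackaging of their joint output as an order-theoretic statement about the orbitals.
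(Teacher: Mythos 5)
Your proposal is correct and follows the same route as the paper: combine \Cref{PerfectFixedSetIsGeneric} and \Cref{TDFixedSetIsGeneric}, then observe that $\Fix{f}$ being perfect and totally disconnected (i.e.\ a Cantor set, noting $0,1 \in \Fix{f}$) is equivalent to the orbitals of nonzero parity forming a dense linear order without endpoints. The paper leaves that equivalence as a ``straightforward exercise,'' which your case analysis simply carries out explicitly.
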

\begin{proof}
    It is a straightforward exercise to check that this is equivalent to saying $\Fix{f}$ is a Cantor set. A compact subset of $\R$ is a Cantor set iff it is perfect, non-empty, and totally disconnected. $\Fix{f}$ is always non-empty (it contains $0$ and $1$), it is perfect for the generic $f$ by \Cref{PerfectFixedSetIsGeneric}, and it is totally disconnected for the generic $f$ by \Cref{TDFixedSetIsGeneric}.
\end{proof}

\begin{thm}\label{GammaIsGenericInAC}
    $C \cap H_{+}^{AC}$ is dense $G_{\delta}$ in $H_{+}^{AC}$.
\end{thm}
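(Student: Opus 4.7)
The plan is to decompose $C \cap H_{+}^{AC}$ as the intersection of two dense $G_{\delta}$ families and conclude via the Baire category theorem. From the preceding corollary, $\brace{f \in H_{+}^{AC} : \Fix{f} \text{ is a Cantor set}}$ is already dense $G_{\delta}$, which handles the linear-order condition in the definition of $C$; what remains is density of positive-parity and negative-parity orbitals in the natural order. Assuming $\Fix{f}$ is a Cantor set, these density conditions are equivalent to: for every basic open interval $U \subseteq I$ with $U \cap \Fix{f} \neq \emptyset$, the interval $U$ contains both a point $x$ with $f \paren{x} > x$ and a point $y$ with $f \paren{y} < y$. One direction uses perfectness of $\Fix{f}$ to fit $U$ around two fixed points bracketing a given one; the other uses total disconnectedness to produce orbitals in any open gap between fixed points inside $U$.

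Fixing a countable open basis $\mathcal{U}$ of $I$, I would set
\[
D_{U}^{+} := \brace{f \in H_{+}^{AC} : U \cap \Fix{f} = \emptyset \text{ or } \exists x \in U,\ f\paren{x} > x},
\]
with $D_{U}^{-}$ defined analogously, and show each is dense $G_{\delta}$ in $H_{+}^{AC}$. The complement of $D_{U}^{+}$ is $\brace{f : U \cap \Fix{f} \neq \emptyset}$ (an $F_{\sigma}$, writable as a union $\bigcup_{n} \brace{f : K_{n} \cap \Fix{f} \neq \emptyset}$ over a compact exhaustion $K_{n}$ of $U$, each summand being closed by a standard compactness argument) intersected with the closed set $\brace{f : \forall x \in U,\ f \paren{x} \leq x}$, hence $F_{\sigma}$, so $D_{U}^{+}$ is $G_{\delta}$. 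For density: take $f \notin D_{U}^{+}$ and $\epsilon > 0$, pick $x_{0} \in U \cap \Fix{f}$, and a small closed interval $\brack{a,b} \subseteq U$ containing $x_{0}$. By the remark following \Cref{CanStuffAFixedIntervalInThere}, choosing $b-a$ small enough produces a $\psi \in H_{+}^{AC}$ that fixes $\brack{a,b}$ pointwise and satisfies $\rho \paren{f,\psi} < \epsilon/2$. Define $g \in H_{+}^{AC}$ to agree with $\psi$ outside $\brack{a,b}$ and, on $\brack{a,b}$, to be the piecewise-linear interpolation through $\paren{a,a}$, $\paren{\tfrac{a+b}{2}, \tfrac{a+b}{2} + \tfrac{b-a}{4}}$, and $\paren{b,b}$. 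Then $g \paren{\tfrac{a+b}{2}} > \tfrac{a+b}{2}$ witnesses $g \in D_{U}^{+}$; since $\psi$ and $g$ agree outside $\brack{a,b}$ and both fix its endpoints, \Cref{RhoDistanceBound} gives $\rho \paren{\psi,g} = \rho_{a}^{b}\paren{\psi,g} \leq 2(b-a) < \epsilon/2$, so $\rho \paren{f,g} < \epsilon$. The case $D_{U}^{-}$ is symmetric using a downward kink at the midpoint. Intersecting over $\mathcal{U}$ and with the comeagre set from the corollary yields $C \cap H_{+}^{AC}$ as a dense $G_{\delta}$.

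No single step is especially deep; the main bookkeeping is the reformulation of membership in $C$ in terms of the $D_{U}^{\pm}$, and the verification that the piecewise modification $\psi \mapsto g$ remains in $H_{+}^{AC}$ with small $\rho$-cost. Both are handled cleanly by the tools already established in \Cref{CanStuffAFixedIntervalInThere} and \Cref{RhoDistanceBound}.
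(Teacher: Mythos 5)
Your density argument is sound and is essentially the paper's own: under the Cantor-fixed-set condition, membership in $C$ is re-encoded as a countable family of conditions, each shown to have a dense set of witnesses (your sets $D_{U}^{\pm}$; the paper uses sets $E_{q,r}$ indexed by pairs of rationals), and density of each is obtained exactly as in \Cref{PerfectFixedSetIsGeneric}: insert a small fixed interval via \Cref{CanStuffAFixedIntervalInThere} and place a cheap piecewise-linear kink on it, with the $\rho$-cost controlled by \Cref{RhoDistanceBound}. The equivalence you assert between the parity-density clauses of $C$ and membership in $\bigcap_{U} \paren{D_{U}^{+} \cap D_{U}^{-}}$, given that $\Fix{f}$ is a Cantor set, is correct, and both directions go through as you sketch.

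The gap is in the $G_{\delta}$ half. You cite the preceding corollary for the claim that $\brace{f \in H_{+}^{AC} : \Fix{f} \textnormal{ is a Cantor set}}$ is dense $G_{\delta}$, but that corollary (via \Cref{PerfectFixedSetIsGeneric} and \Cref{TDFixedSetIsGeneric}) only yields comeagreness: the totally-disconnected part is indeed $G_{\delta}$, but perfectness of $\Fix{f}$ is only shown to be comeagre --- each set $D_{U}$ in \Cref{PerfectFixedSetIsGeneric} is shown to have dense interior, not to be $G_{\delta}$, and its natural form is $\brace{f : \Fix{f} \cap U = \emptyset} \cup \brace{f : \absval{\Fix{f} \cap U} \geq 2}$, a union of a $G_{\delta}$ set with an $F_{\sigma}$ set, which is not visibly $G_{\delta}$. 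Consequently your decomposition proves that $C \cap H_{+}^{AC}$ is comeagre (hence dense), but does not establish that it is $G_{\delta}$. The repair is the one the paper uses and costs one line: $C$ is $G_{\delta}$ in $H_{+}$ by the folklore theorem quoted at the start of \Cref{sec:ConjClasses}, and the inclusion $H_{+}^{AC} \hookrightarrow H_{+}$ is continuous (the $\rho$-topology refines uniform convergence), so $C \cap H_{+}^{AC}$ is $G_{\delta}$ in $H_{+}^{AC}$; your argument then supplies the density.
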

\begin{proof}
    Since $C$ is $G_{\delta}$ in $H_{+}$, we have that $C \cap H_{+}^{AC}$ is $G_{\delta}$ in $H_{+}^{AC}$, so we need only show $C \cap H_{+}^{AC}$ is dense.

    By the previous corollary, it now suffices to show that for a dense set of $f$, the orbitals of each non-zero parity are dense in the space of non-zero orbitals. This is equivalent to $f \in \bigcap_{q,r \in \Q \cap I} E_{q,r}$, where we define $E_{q,r}$ to be the set of $f \in H_{+}^{AC}$ such that if $q \notin \Fix{f}$, $r \notin \Fix{f}$, $\orb{f}{q} < \orb{f}{r}$, then there exist $s,t \in \Q$ such that $\orb{f}{q} < \orb{f}{s} < \orb{f}{r}$, $\orb{f}{q} < \orb{f}{t} < \orb{f}{r}$, $s$ has positive parity, and $t$ has negative parity. Indeed, by a similar argument to the one used in \Cref{PerfectFixedSetIsGeneric}, one may show that each $E_{q,r}$ has dense interior.
\end{proof}

\begin{thm}\label{NullFixedPtSetIsGeneric}
    $N \cap H_{+}^{AC}$ is dense $G_{\delta}$ in $H_{+}^{AC}$.
\end{thm}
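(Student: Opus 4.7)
The plan is to handle the $G_{\delta}$ and density properties separately.

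For the $G_{\delta}$ part, I would show that each set $A_{n} := \brace{f \in H_{+}^{AC} : \lambda \paren{\Fix{f}} < 1/n}$ is in fact open. Suppose $\lambda \paren{\Fix{f}} < 1/n$. By outer regularity of Lebesgue measure, choose an open $U \supseteq \Fix{f}$ with $\lambda \paren{U} < 1/n$. Since $\absval{f - \id}$ attains a positive minimum $\mu$ on the compact set $I \setminus U$, any $g$ with $\norm{g - f}_{\infty} < \mu/2$ satisfies $\Fix{g} \subseteq U$, hence $\lambda \paren{\Fix{g}} < 1/n$. This uniform neighborhood is in particular a $\rho$-neighborhood, since the $\rho$-topology refines the uniform one. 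Intersecting over $n$, $N \cap H_{+}^{AC} = \bigcap_{n} A_{n}$ is $G_{\delta}$.

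For density, given $f \in H_{+}^{AC}$ and $\epsilon > 0$, fix the auxiliary map $h \paren{x} := x^{2}$, whose inverse $h^{-1} \paren{y} = \sqrt{y}$ is AC since $\sqrt{y}$ is the indefinite integral of the $L_{1}$ function $1/\paren{2 \sqrt{y}}$; so $h \in H_{+}^{AC}$ with $\Fix{h} = \brace{0,1}$. Define the convex combination $g_{\delta} := \paren{1 - \delta} f + \delta h$ for $\delta \in \paren{0,1}$. Then $g_{\delta}$ is strictly increasing, absolutely continuous, and fixes $0$ and $1$; furthermore $g_{\delta}' = \paren{1 - \delta} f' + \delta h' \geq \delta \cdot 2x > 0$ almost everywhere, so $g_{\delta}^{-1}$ is AC as well. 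Hence $g_{\delta} \in H_{+}^{AC}$, and by \Cref{RhoDistanceBound},
$$\rho \paren{f, g_{\delta}} = \delta \int_{0}^{1} \absval{f' - h'} \d \lambda \leq 2 \delta.$$

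The key step is to verify $\lambda \paren{\Fix{g_{\delta}}} = 0$ for an appropriate $\delta$. Rearranging $g_{\delta} \paren{x} = x$ gives $\paren{1 - \delta} \paren{f \paren{x} - x} = \delta \paren{x - h \paren{x}}$. If $x \in \Fix{f}$, this forces $h \paren{x} = x$, so $x \in \brace{0, 1}$. Otherwise, since $x - h \paren{x} = x \paren{1-x} \geq 0$, solutions can only occur on the open set $V := \brace{x : f \paren{x} > x}$, where dividing by $\phi \paren{x} := f \paren{x} - x > 0$ transforms the equation into $r \paren{x} = \paren{1 - \delta}/\delta$ with $r \paren{x} := x \paren{1-x}/\paren{f \paren{x} - x}$ continuous on $V$. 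The level sets $\brace{r = c}_{c \in \R}$ are pairwise disjoint subsets of $I$, so at most countably many have positive Lebesgue measure; since $\delta \mapsto \paren{1 - \delta}/\delta$ is a bijection $\paren{0,1} \to \paren{0, \infty}$, all but countably many $\delta \in \paren{0, \epsilon/2}$ satisfy $\lambda \paren{\Fix{g_{\delta}}} = 0$. Any such $\delta$ gives the required approximation.

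The main obstacle is the off-diagonal analysis on $V$, but this dissolves once one notices that the fixed-point equation reduces to a level-set problem for a continuous function, together with the elementary measure-theoretic fact that a function on $I$ has at most countably many level sets of positive Lebesgue measure.
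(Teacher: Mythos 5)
Your proof is correct, and while the $G_{\delta}$ half is essentially the paper's argument (outer regularity, a positive uniform gap $\absval{f - \id} \geq \mu$ off the open cover of $\Fix{f}$, and the fact that the $\rho$-topology refines the uniform topology), your density argument takes a genuinely different route. The paper simply observes that $N$ contains all non-identity strictly increasing polynomials fixing $0$ and $1$ --- which have finitely many, hence null, fixed points --- and invokes density of such polynomials in $H_{+}^{AC}$. You instead perturb the given $f$ directly by a convex combination $g_{\delta} = \paren{1-\delta} f + \delta h$ with $h \paren{x} = x^{2}$, reduce the fixed-point set of $g_{\delta}$ (away from $\brace{0,1}$) to a level set of the continuous function $r \paren{x} = x \paren{1-x} / \paren{f \paren{x} - x}$ on $\brace{f > x}$, and use the fact that pairwise disjoint level sets can have positive measure for at most countably many levels to choose a good $\delta < \epsilon/2$; the estimate $\rho \paren{f, g_{\delta}} \leq 2\delta$ follows from \Cref{RhoDistanceBound} exactly as you say. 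What your approach buys is self-containment: it does not rely on the (unproven in the paper, though standard) density of strictly increasing polynomials in the $\rho$-metric, and it shows the slightly stronger fact that arbitrarily small convex perturbations of any given $f$ land in $N$. What the paper's approach buys is brevity, at the cost of that density fact. One step of yours deserves a sentence or a citation: the claim that $g_{\delta}' > 0$ a.e.\ together with absolute continuity of $g_{\delta}$ forces $g_{\delta}^{-1}$ to be absolutely continuous. This is true --- by the Banach--Zarecki theorem it suffices that $g_{\delta}^{-1}$ maps null sets to null sets, and if a null set $A$ had $\lambda \paren{g_{\delta}^{-1} \brack{A}} > 0$ then $\lambda \paren{A} = \int_{g_{\delta}^{-1} \brack{A}} g_{\delta}' \, \d \lambda > 0$, a contradiction --- but as stated it reads as an unjustified leap, so spell it out or cite it.
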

\begin{proof}
    It suffices to show that $F_{\epsilon} := \brace{f \in H_{+} : \lambda \paren{\Fix{f}} < \epsilon}$ is open in $H_{+}$, since then $N = \bigcap_{\epsilon \in \Q_{>0}} F_{\epsilon}$ is $G_{\delta}$ in $H_{+}$, and so $N \cap H_{+}^{AC}$ is $G_{\delta}$ in $H_{+}^{AC}$.

    Let $f \in F_{\epsilon}$. By regularity of Lebesgue measure, there is an open set $U \subseteq I$ with $\lambda \paren{U} < \epsilon$ and $\Fix{f} \subseteq U$. Since $I \setminus U$ is closed and disjoint from $\Fix{f}$, there is some $\delta > 0$ such that $\absval{f \paren{x} - x} \geq \delta$ for all $x \in I \setminus U$. Let $g \in H_{+}$ with $\norm{f - g}_{\infty} \leq \frac{\delta}{2}$; then we have $x \in I \setminus U \implies \absval{g \paren{x} - x} \geq \absval{f \paren{x} - x} - \absval{f \paren{x} - g \paren{x}} \geq \delta - \frac{\delta}{2} = \frac{\delta}{2} > 0 \implies x \notin \Fix{g}$. Thus $\Fix{g} \subseteq U$, so that $\lambda \paren{\Fix{g}} \leq \lambda \paren{U} < \epsilon$, and so $g \in F_{\epsilon}$. Hence $F_{\epsilon}$ is open, as desired.

    $N$ contains the set of all non-identity, strictly-increasing polynomials. Indeed, if $p$ is such a polynomial, then it may have at most $\deg \paren{p}$ fixed points. Finite sets are null, so $p \in N$. Such polynomials are dense in $H_{+}^{AC}$, so $N \cap H_{+}^{AC}$ is dense also.
\end{proof}

The following result is an analogue for $H_{+}^{AC}$ of a standard fact about $H_{+}$. We omit the proof, as it is essentially the same as for the $H_{+}$.

\begin{lem}\label{BumpsAreConjugate}
    If $f \in H_{+}^{AC} \paren{\brack{a,b}}$ and $g \in H_{+}^{AC} \paren{\brack{c,d}}$ have no fixed points other than the endpoints, and their non-zero orbitals have the same parity, then there is an absolutely continuous bijection $h \from \brack{a,b} \to \brack{c,d}$ which conjugates $f$ to $g$ (that is, $h \circ f = g \circ h$).
\end{lem}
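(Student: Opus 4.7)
The plan is to run the standard back-and-forth argument for conjugating bumps in $H_{+}$, and then do the extra bookkeeping needed for absolute continuity. Without loss of generality, assume both $f$ and $g$ have positive parity, so $f \paren{x} > x$ on $\paren{a,b}$ and $g \paren{y} > y$ on $\paren{c,d}$. Fix base points $x_{0} \in \paren{a,b}$ and $y_{0} \in \paren{c,d}$; the orbits $\paren{f^{n} \paren{x_{0}}}_{n \in \Z}$ and $\paren{g^{n} \paren{y_{0}}}_{n \in \Z}$ are then strictly increasing and cofinal in both directions inside $\paren{a,b}$ and $\paren{c,d}$, respectively. Set $J_{n} := \brack{f^{n} \paren{x_{0}}, f^{n+1} \paren{x_{0}}}$ and $K_{n} := \brack{g^{n} \paren{y_{0}}, g^{n+1} \paren{y_{0}}}$; these tile the respective open intervals.

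Next I would choose any AC order-preserving bijection $h_{0} \from J_{0} \to K_{0}$ (e.g.\ the affine map), and define $h$ on $\paren{a,b}$ by $h \dhr_{J_{n}} := g^{n} \circ h_{0} \circ f^{-n}$ for each $n \in \Z$, extending by $h \paren{a} := c$ and $h \paren{b} := d$. The definitions on adjacent $J_{n}$'s agree on shared endpoints, so $h$ is a well-defined order-preserving continuous bijection $\brack{a,b} \to \brack{c,d}$ (using cofinality of the orbits at the endpoints), and the identity $h \circ f = g \circ h$ is immediate from the construction. This part is identical to the argument in $H_{+}$; the case of negative parity is symmetric.

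The main obstacle, and the only place absolute continuity enters, is verifying $h \in AC \paren{\brack{a,b}}$. By the FTC criterion recalled in the preliminaries (applicable since $h$ is monotone), it suffices to show $\int_{a}^{b} h' \, \d \lambda = d - c$. Since $f^{\pm 1}, g^{\pm 1} \in H_{+}^{AC}$ and $h_{0}$ is AC, and since a composition $\beta \circ \alpha$ of monotone AC maps is AC --- a direct $\epsilon$-$\delta$ verification, using that $\alpha$ sends unions of pairwise disjoint intervals of small total length to unions of pairwise disjoint intervals of small total length --- we get that $h \dhr_{J_{n}} = g^{n} \circ h_{0} \circ f^{-n}$ is AC on each $J_{n}$. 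Applying FTC on $J_{n}$ gives $\int_{J_{n}} h' \, \d \lambda = g^{n+1} \paren{y_{0}} - g^{n} \paren{y_{0}} = \absval{K_{n}}$. Summing over $n \in \Z$ and using $\bigcup_{n} K_{n} = \paren{c,d}$ yields $\int_{a}^{b} h' \, \d \lambda = \sum_{n \in \Z} \absval{K_{n}} = d - c$, completing the verification.
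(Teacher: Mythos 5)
Your proof is correct and follows exactly the route the paper intends when it omits this proof (the standard fundamental-domain construction $h\dhr_{J_n} = g^{n}\circ h_{0}\circ f^{-n}$ from the $H_{+}$ case), with the genuinely new ingredient being your verification of absolute continuity via the telescoping computation $\int_a^b h'\,\d\lambda = \sum_n \absval{K_n} = d-c$. The only implicit step is that for an \emph{increasing} $h$ the single endpoint identity $\int_a^b h'\,\d\lambda = h(b)-h(a)$ already yields the fundamental theorem of calculus at every intermediate point (since $\int_a^x h'\,\d\lambda \le h(x)-h(a)$ always holds for monotone functions), which is standard and harmless to cite.
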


We are now ready to prove the main result of this section.

\begin{thm}\label{HACHasGenerics}
    $C^{AC} := C \cap N \cap H_{+}^{AC}$ is a dense $G_{\delta}$ conjugacy class of $H_{+}^{AC}$.
\end{thm}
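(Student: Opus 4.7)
The plan is to handle the dense $G_{\delta}$ part and the conjugacy-class part separately. For the first part: by \Cref{GammaIsGenericInAC} and \Cref{NullFixedPtSetIsGeneric}, both $C \cap H_{+}^{AC}$ and $N \cap H_{+}^{AC}$ are dense $G_{\delta}$ subsets of the Polish space $H_{+}^{AC}$, so their intersection $C^{AC}$ is dense $G_{\delta}$ as well. Furthermore, $C$ and $N$ are both conjugation-invariant in $H_{+}^{AC}$ --- the former because conjugation is an order-isomorphism of orbitals preserving parity, and the latter because absolutely continuous bijections preserve the $\sigma$-ideal of Lebesgue null sets. Hence $C^{AC}$ is a union of conjugacy classes.

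For the conjugacy part, I will show that any two $f, g \in C^{AC}$ are conjugate inside $H_{+}^{AC}$. Since $f, g \in C$, the folklore theorem for $H_{+}$ yields some $\hat{h} \in H_{+}$ with $g = \hat{h} f \hat{h}^{-1}$; this $\hat{h}$ need not be absolutely continuous, but it does determine a parity-preserving, order-preserving bijection $\pi$ between the nonzero orbitals of $f$ and those of $g$ via $\pi \paren{\orb{f}{x}} = \orb{g}{\hat{h} \paren{x}}$. Using $\pi$ as scaffolding, I build a new conjugator $h$ orbital by orbital: for each nonzero orbital $J$ of $f$, \Cref{BumpsAreConjugate} supplies an absolutely continuous order-preserving bijection $h_{J} \from \overline{J} \to \overline{\pi \paren{J}}$ conjugating $f \dhr \overline{J}$ to $g \dhr \overline{\pi \paren{J}}$. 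These pieces paste together into a unique increasing $h \from I \to I$; its values on $\Fix{f}$ are forced to agree with the order-isomorphism that $\pi$ induces from $\Fix{f}$ onto $\Fix{g}$, and continuity at such points follows from the fact that order-preserving bijections between compact subsets of $\R$ are automatically homeomorphisms. Routine arguments then give $h \in H_{+}$ and $g = h f h^{-1}$.

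The hard step --- and the one I expect to be the main obstacle --- is verifying $h, h^{-1} \in AC \paren{I}$. This is where property (iii) becomes essential. My plan is to invoke the Banach-Zaretsky theorem: a continuous monotone function on $I$ lies in $AC \paren{I}$ if and only if it enjoys the Luzin $N$-property of sending Lebesgue null sets to null sets. Given a null $E \subseteq I$, decompose $E = \paren{E \cap \Fix{f}} \sqcup \paren{E \setminus \Fix{f}}$. The first piece maps under $h$ into $\Fix{g}$, which is null by property (iii) applied to $g$. The second piece is contained in a countable disjoint union of orbitals $J$, on each of which $h$ coincides with the absolutely continuous $h_{J}$, whose own Luzin $N$-property makes $h_{J} \paren{E \cap J}$ null; a countable union of null sets is null. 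Hence $h \paren{E}$ is null, giving $h \in AC \paren{I}$. The symmetric argument using property (iii) for $f$ together with absolute continuity of the $h_{J}^{-1}$ gives $h^{-1} \in AC \paren{I}$, and so $h \in H_{+}^{AC}$.

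Without property (iii), this patchwork $h$ could easily fail to be absolutely continuous across the fixed-point set, as illustrated by the canonical non-example $s$ from \Cref{sec:Preliminaries}: $s'$ vanishes outside a null support yet $s \paren{1} - s \paren{0} = 1$. Property (iii) outlaws precisely this pathology by forcing $\Fix{f}$ and $\Fix{g}$ to be null, so the only place $h$ could concentrate mass is null already; this is the crucial measure-theoretic input that distinguishes the $H_{+}^{AC}$ characterization from the purely dynamical $H_{+}$ one.
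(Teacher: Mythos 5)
Your proposal is correct, and its skeleton matches the paper's: dense $G_{\delta}$ from \Cref{GammaIsGenericInAC} and \Cref{NullFixedPtSetIsGeneric}, conjugacy-invariance of $C$ and $N$ (the latter via the Luzin property of absolutely continuous maps), then building the conjugator $h$ by taking $\hat{h} \in H_{+}$ from the folklore $H_{+}$ result and replacing it orbital-by-orbital with the absolutely continuous conjugators supplied by \Cref{BumpsAreConjugate}, glued along $\Fix{f}$. Where you genuinely diverge is the final, and central, step of certifying $h, h^{-1} \in AC \paren{I}$: the paper never leaves its stated toolkit --- since $h$ is monotone it is differentiable a.e.\ with $L_{1}$ derivative, so absolute continuity reduces to the fundamental theorem of calculus, and the paper verifies $\int_{0}^{x} h' \, \d\lambda = h \paren{x}$ at points of $\Fix{f}$ by splitting the integral over the orbital decomposition and using that both $\Fix{f}$ and $\Fix{g}$ are null; you instead invoke the Banach--Zaretsky theorem and check Luzin's property (N) by splitting a null set $E$ into $E \cap \Fix{f}$ (whose image lands in the null set $\Fix{g}$) and its orbital part. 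The two verifications use the same decomposition and the same input (iii), so this is a repackaging rather than a new idea: your version avoids the integral bookkeeping and makes the role of (iii) conceptually transparent, at the cost of importing a theorem (Banach--Zaretsky / the equivalence of (N) with absolute continuity for continuous monotone functions) that the paper does not list among its preliminaries, whereas the paper's computation is self-contained. One shared caveat: both arguments need the orbital conjugators of \Cref{BumpsAreConjugate} to have absolutely continuous \emph{inverses} (you say so explicitly for the $h_{J}^{-1}$; the paper's ``interchange the roles of $f$ and $g$'' step needs it just as much), which is slightly more than the lemma literally states but is clearly what its construction provides, so I would not count this as a gap peculiar to your write-up.
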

\begin{proof}
    $C \cap H_{+}^{AC}$ is dense $G_{\delta}$ by \Cref{GammaIsGenericInAC}, and $N \cap H_{+}^{AC}$ is dense $G_{\delta}$ by \Cref{NullFixedPtSetIsGeneric}. Thus $C^{AC}$ is dense $G_{\delta}$. Moreover, let $f \in C^{AC}$ and $h \in H_{+}^{AC}$. Then $f \in C$ implies $hfh^{-1} \in C$ since $C$ is a conjugacy class in $H_{+}$, and $f \in N$ implies $hfh^{-1} \in N$ since $\Fix{hfh^{-1}} = h \brack{\Fix{f}}$, and absolutely continuous maps send null sets to null sets. Thus $C^{AC}$ is conjugacy invariant, and so it remains to show that any two elements of $C^{AC}$ are conjugate in $H_{+}^{AC}$.

    Fix $f,g \in C^{AC}$. Let $\paren{\paren{a_{n}, b_{n}}}_{n < \omega}$ be an enumeration of the connected components of $I \setminus \Fix{f}$. Since $f,g \in C$ are conjugate in $H_{+}$, there is a map $\hat{h} \in H_{+}$ with $\hat{h} \circ f = g \circ \hat{h}$. In particular, $\hat{h} \brack{\Fix{f}} = \Fix{g}$, and $\parity{\paren{a_{n},b_{n}}, f} = \parity{\paren{\hat{h} \paren{a_{n}}, \hat{h} \paren{b_{n}}}, g}$ for each $n < \omega$. Then by \Cref{BumpsAreConjugate}, for each $n$ there is an absolutely continuous homeomorphism $h_{n} \from \brack{a_{n}, b_{n}} \to \brack{\hat{h} \paren{a_{n}}, \hat{h} \paren{b_{n}}}$ which satisfies $h_{n} \circ f \dhr_{\brack{a_{n}, b_{n}}} = g \dhr_{\brack{\hat{h} \paren{a_{n}}, \hat{h} \paren{b_{n}}}} \circ h_{n}$. Then define $h \in H_{+}$ by $h \paren{x} := \begin{cases} h_{n} \paren{x} & \text{if $x \in \paren{a_{n}, b_{n}}$ for $n < \omega$,}\\ \hat{h} \paren{x} & \text{if $x \in \Fix{f}$.}\end{cases}$

    For $x \in I$, we have either $x \in \paren{a_{n}, b_{n}}$ for some $n$, so $h \paren{f \paren{x}} = h_{n} \paren{f \paren{x}} = g \paren{h_{n} \paren{x}} = g \paren{h \paren{x}}$; or otherwise $x \in \Fix{f}$, in which case $h \paren{f \paren{x}} = h \paren{x} = \hat{h} \paren{x} = g \paren{\hat{h} \paren{x}} = g \paren{h \paren{x}}$. In either case, $h \circ f = g \circ h$, i.e.\ $f$ and $g$ are conjugate.

    $h \in H_{+}$, so it is differentiable almost everywhere and its derivative is $L_{1}$. We claim $h$ is absolutely continuous, i.e., that $\int_{0}^{x} h' \paren{t} \d t = h \paren{x}$ for all $x \in I$. We need only check that this holds for $x \in \Fix{f}$; then if instead $x \in \paren{a_{n},b_{n}}$ for some $n$, we have $\int_{0}^{x} h' \paren{t} \d t = \int_{0}^{a_{n}} h' \paren{t} \d t + \int_{a_{n}}^{x} h' \paren{t} \d t = h \paren{a_{n}} + \paren{h_{n} \paren{x} - h_{n} \paren{a_{n}}} = h \paren{x}$ since $a_{n} \in \Fix{f}$.
    
    Thus, fix $x \in \Fix{f}$ and let $E := \brace{n < \omega : \paren{a_{n}, b_{n}} \subseteq \brack{0,x}}$. Then $\brack{0,x} \setminus \Fix{f} = \bigsqcup_{n \in E} \paren{a_{n}, b_{n}}$, and it follows that $h \brack{\bigsqcup_{n \in E} \paren{a_{n}, b_{n}}} = \brack{0,h \paren{x}} \setminus \Fix{g}$ since $g \circ h = h \circ f$. Then since $\Fix{f}$ and $\Fix{g}$ are both null, we have:
    \begin{align*}
        \int_{0}^{x} h' \paren{t} \d t &= \int_{\Fix{f} \cap \brack{0,x}} h' \paren{t} \d t + \sum_{n \in E} \int_{a_{n}}^{b_{n}} h' \paren{t} \d t\\
        &= \sum_{n \in E} \int_{a_{n}}^{b_{n}} h_{n}' \paren{t} \d t\\
        &= \sum_{n \in E} h_{n} \paren{a_{n}} - h_{n} \paren{b_{n}}\\
        &= \lambda \paren{\bigsqcup_{n \in E} \paren{h_{n} \paren{a_{n}}, h_{n} \paren{b_{n}}}}\\
        &= \lambda \paren{\bigsqcup_{n \in E} h \brack{\paren{a_{n}, b_{n}}}}\\
        &= \lambda \paren{h \brack{\bigsqcup_{n \in E} \paren{a_{n}, b_{n}}}}\\
        &= \lambda \paren{\brack{0,h \paren{x}} \setminus \Fix{g}}\\
        &= \lambda \paren{\brack{0,h \paren{x}}}\\
        &= h \paren{x}.
    \end{align*}
    The proof that $h^{-1}$ is absolutely continuous proceeds similarly, by interchanging the roles of $f$ and $g$. Thus $h \in H_{+}^{AC}$ conjugates $f$ to $g$, as desired.
\end{proof}

\printbibliography

\end{document}